\documentclass{amsart}
\usepackage{verbatim}
\usepackage{rotating} 
\usepackage{amssymb}
\usepackage{mathrsfs,amsfonts,amsmath,amssymb,epsfig,amscd,xy,amsthm,pb-diagram} 



\hyphenation{arch-i-med-e-an}


\newtheorem{theorem}{Theorem}[section]
\newtheorem{proposition}[theorem]{Proposition}

\newtheorem{lemma}[theorem]{Lemma}

\newtheorem{cor}[theorem]{Corollary}
\newtheorem{question}[theorem]{Question}
\newtheorem{definition}[theorem]{Definition}

\newtheorem{convention}[theorem]{Convention}
\theoremstyle{plain}

\theoremstyle{remark}

\newtheorem{remark}[theorem]{Remark}
\newtheorem{example}[theorem]{Example}

\newtheorem{assume}[theorem]{Assumption}

\newcommand{\Q}{{\mathbb Q}}

\newcommand{\Gmn}{\mathbb{G}_{\text{m}}^{n}}

\DeclareMathOperator{\oa}{oa}
\DeclareMathOperator{\pre}{pre}

\DeclareMathOperator{\Pre}{Pre}
\DeclareMathOperator{\Per}{Per}

\newcommand{\Qbar}{\bar{\Q}}

\DeclareMathOperator{\Spec}{Spec}

\DeclareMathOperator{\N}{\mathbb{N}}

\DeclareMathOperator{\id}{id}

\newcommand{\bP}{{\mathbb P}}

\newcommand{\bG}{{\mathbb G}}

\newcommand{\bA}{{\mathbb A}} \newcommand{\bQ}{{\mathbb Q}}

\newcommand{\lra}{\longrightarrow}

\newcommand{\bQb}{\bar{\bQ}}

\newcommand{\hhat}{{\widehat h}}

\newcommand{\scrS}{\mathscr{S}}



\author{D.~Ghioca}
\address{
Dragos Ghioca\\
Department of Mathematics\\
University of British Columbia\\
Vancouver, BC V6T 1Z2\\
Canada
}
\email{dghioca@math.ubc.ca}

\author{K.~D.~Nguyen}
\address{
Khoa D.~Nguyen \\
Department of Mathematics\\
University of British Columbia\\
And Pacific Institute for The Mathematical Sciences\\ 
Vancouver, BC V6T 1Z2, Canada}
\email{dknguyen@math.ubc.ca}


\keywords{dynamics, Bounded Height, Structure Theorem, Medvedev-Scanlon theorem}
\subjclass[2010]{11G50, 37P15}
\thanks{The first author is partially supported by an NSERC grant. The second author is partially
	supported by a fellowship from the Pacific Institute for the Mathematical 
	Sciences.}

\begin{document}
	\title[Dynamical Anomalous Subvarieties]{Dynamical Anomalous Subvarieties: Structure and Bounded Height Theorems}

	\begin{abstract}		 
		According to Medvedev and Scanlon \cite{MS2014}, a polynomial $f(x)\in \Qbar[x]$ of degree $d\geq 2$ is called disintegrated if it is not conjugate to $x^d$ or to $\pm C_d(x)$ (where $C_d$ is the Chebyshev polynomial of degree $d$). Let $n\in\N$, let
		$f_1,\ldots,f_n\in\Qbar[x]$ be disintegrated polynomials of degrees at least 2,
		and let
		$\varphi=f_1\times\ldots\times f_n$ be the corresponding coordinate-wise self-map
		of $(\bP^1)^n$. Let $X$ be an irreducible subvariety of $(\bP^1)^n$ of
		dimension $r$ defined over $\Qbar$. We define the \emph{$\varphi$-anomalous} locus of $X$ which is  related to the \emph{$\varphi$-periodic}
		subvarieties of $(\bP^1)^n$. We prove that the $\varphi$-anomalous locus of $X$ is Zariski closed; this is a dynamical analogue of a theorem of Bombieri, Masser, and Zannier \cite{BMZ07}.  We also prove that the points in the intersection of $X$ with the union of all irreducible $\varphi$-periodic subvarieties of $(\bP^1)^n$ of codimension $r$ have bounded height outside the $\varphi$-anomalous locus of $X$; this is  a dynamical analogue of  Habegger's theorem \cite{Habegger09} which was previously 
		conjectured in \cite{BMZ07}. The slightly more general self-maps $\varphi=f_1\times\ldots\times f_n$
		where each $f_i\in \Qbar(x)$ is a disintegrated rational map are also treated at the end of the paper.
	\end{abstract}
	\maketitle

	\section{Introduction}\label{sec:intro}
	Throughout this paper, by a (not necessarily irreducible) variety, we mean a reduced scheme
	of finite type \emph{over $\Qbar$}; and by a subvariety we mean a reduced \emph{closed}
	subscheme. For a map $\mu$
	from a set to itself and for every positive integer $m$,
	we let $\mu^m$ denote the $m$-fold iterate: $\mu\circ\ldots\circ\mu$; the notation $\mu^0$ denotes the identity map. 	
	Let $h$ 
	denote the absolute logarithmic Weil height on $\bP^1(\Qbar)$ (see \cite[Chapter 1]{BG} 
	or \cite[Part B]{HS}). Let $n$ be a positive 
	integer,
	we define the height function $h_n$ on $(\bP^1)^n(\Qbar)$ by $h_n(a_1,\ldots,a_n):=h(a_1)+\ldots+h(a_n)$. 
	When we say that a subset of $(\bP^1)^n(\Qbar)$ has bounded height, we mean boundedness
	with respect to $h_n$. 
	

	After a series of papers \cite{BMZ_TAMS06}, \cite{BMZ_Scuola08} and \cite{BMZ07} following the seminal work \cite[Theorem 1]{BMZ99},
	Bombieri, Masser and Zannier define anomalous subvarieties
	in $\Gmn$ as follows. By a \emph{special} subvariety of $\Gmn$, we mean a 
	translate  of an irreducible algebraic subgroup. 
	For any  irreducible subvariety $X\subseteq \Gmn$ of dimension $r$, an
	irreducible subvariety $Y$ of $X$ is said to be \emph{anomalous} 
	(or better, $X$-anomalous) if there exists
	a special subvariety $Z$ satisfying the following conditions:
	\begin{equation}
\label{definition anomalous classic}
Y\subseteq X\cap Z\text{ and }\dim(Y) > \max\{0,\dim(X)+\dim(Z)-n\}.
	\end{equation}
	We define $X^{\oa}:=X\setminus \bigcup_Y Y$, 
	where $Y$ ranges over all anomalous subvarieties of $X$. We let $\mathbb{G}_m^{n,[r]}$ be the union of all algebraic subgroups of $\Gmn$ of \emph{codimension} $r$. The following has been established by
	Bombieri, Masser, Zannier \cite[Theorem~1.4]{BMZ07} and Habegger \cite[Theorem~1.2]{Habegger09} (after being previously conjectured in \cite{BMZ07}):
	
	\begin{theorem}\label{thm:2 in 1}  
	Let $X$ be an irreducible
	subvariety of $\Gmn$ of dimension $r$ (defined over $\Qbar$, as always). We have:
	\begin{itemize} 
	\item[(a)] \rm{(Bombieri-Masser-Zannier)} Structure Theorem: the set $X^{\oa}$ is Zariski open in $X$. Moreover, there exists a finite collection $\mathcal{T}$ of subtori
	of $\Gmn$ (depending on $X$) such that the anomalous locus of $X$ is the union of all  anomalous subvarieties $Y$ of $X$ for which 
	there exists  a translate $Z$ of a tori in $\mathcal{T}$
	satisfying $Y\subseteq X\cap Z$ and $\dim(Y)>\max\{0,\dim(X)+\dim(Z)-n\}$. 
	\item[(b)] \rm{(Habegger)} Bounded Height Theorem: the
	set $X^{\oa}\cap 
	\mathbb{G}_m^{n,[r]}$ has bounded height.
	\end{itemize}
	\end{theorem}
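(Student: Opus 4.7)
The plan is to prove (a) first, since the finite family $\mathcal T$ produced by the structure theorem is exactly what is needed to organize the bounded-height argument in (b). Both parts rest on the same underlying principle: one controls large components of an intersection $X \cap gH$ by passing to the quotient map $\pi_H \colon \Gmn \to \Gmn/H$ and analyzing the restriction $\pi_H|_X$.

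For part (a), I would parametrize subtori of codimension $k$ by the rank-$k$ saturated sublattices of $\Z^n$ and proceed by Noetherian induction on $\dim X$. The key observation is that any $X$-anomalous $Y \subseteq X \cap gH$ forces a nontrivial constraint on $H$: either $X$ itself is contained in a translate of $H$ (so $H$ contains the stabilizer of $X$, of which there are finitely many possibilities), or $\pi_H|_X$ has a fiber component at $Y$ of dimension strictly larger than the generic fiber dimension. In the latter case, by upper semicontinuity of fiber dimension, the exceptional fibers lie over a proper closed subvariety of $\pi_H(X)$, whose structure can be controlled inductively. Iterating this analysis and invoking Noetherianity on the collection of such ``jump loci,'' I would assemble the desired finite collection $\mathcal T$. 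The complement $X^{\oa}$ is then $X$ minus finitely many closed subvarieties, hence Zariski open.

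For part (b), fix $\mathcal T$ from (a) and let $x \in X^{\oa} \cap \bG_m^{n,[r]}$. Then $x$ lies in some algebraic subgroup $H$ of codimension $r$, and the $\oa$-condition at $x$ forces components of $X \cap gH$ through $x$ to have the expected dimension $0$. The core of the argument will be a \emph{uniform} height inequality: there exist constants $C_1, C_2 > 0$, depending only on $X$, such that for every codimension-$r$ subtorus $H$ outside a controlled ``bad list'' (to be absorbed into $\mathcal T$), one has
\[
	h(x) \le C_1 \cdot h(\pi_H(x)) + C_2 \qquad \text{for all } x \in X.
\]
Applied to $x \in H$, the image $\pi_H(x)$ is the identity of the quotient, whose height is zero; hence $h(x) \le C_2$.

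The main obstacle is precisely the uniformity of this height inequality across infinitely many codimension-$r$ subtori $H$. Following Habegger, one would execute a geometry-of-numbers argument: choose a basis for the dual lattice of $H$ realizing its successive minima, apply Minkowski's second theorem to bound the complexity of the isogeny representing $\pi_H$, and combine this with an arithmetic B\'ezout-type estimate on $X$ to bound the distortion constants independently of $H$. This is the deepest step; once it is secured, both the openness conclusion in (a) and the bounded-height conclusion in (b) follow from the local analysis outlined above.
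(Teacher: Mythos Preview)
The paper does not prove Theorem~\ref{thm:2 in 1} at all: it is stated purely as background, with part (a) attributed to \cite[Theorem~1.4]{BMZ07} and part (b) to \cite[Theorem~1.2]{Habegger09}, and the paper immediately moves on to formulate and prove its dynamical analogues (Theorems~\ref{thm:informal} and~\ref{thm:open}). So there is no ``paper's own proof'' to compare against; the paper's treatment is simply a citation.

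Your proposal is therefore not competing with anything in the paper. As a sketch of the strategies in the cited references it is broadly reasonable: the fiber-dimension/semicontinuity mechanism for (a) and the uniform height inequality combined with geometry of numbers for (b) do capture the shape of the arguments in \cite{BMZ07} and \cite{Habegger09}. But as written it is an outline rather than a proof. In particular, the step you flag as ``the deepest'' --- obtaining height constants $C_1,C_2$ that are \emph{uniform} over all codimension-$r$ subtori $H$ --- is the entire content of Habegger's paper, and your description (``Minkowski's second theorem \ldots\ arithmetic B\'ezout-type estimate'') compresses several pages of delicate estimates (including Ax's theorem, Philippon-style lower bounds, and a careful degree/height bookkeeping) into a single sentence. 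Likewise, in (a) the passage ``invoking Noetherianity on the collection of such jump loci'' hides the genuine work of showing that only finitely many subtori $H$ (up to the controlled family $\mathcal T$) can arise; this is where Bombieri--Masser--Zannier invoke Ax's functional Schanuel theorem, which does not appear in your sketch. If you intend this as a self-contained proof rather than a reader's guide to the references, those gaps would need to be filled.
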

	
	The Bounded Height Theorem is closely related to the problem of unlikely intersections in arithmetic geometry introduced in \cite{BMZ99} whose motivation comes from the classical Manin-Mumford conjecture (which is Raynaud's theorem \cite{Raynaud, Raynaud2} for abelian varieties and Laurent's theorem \cite{Laurent}  for $\Gmn$). Moreover, Pink \cite{Pink}
	and Zilber \cite{Zilber} independently propose a similar problem to the unlikely intersection problem introduced in \cite{BMZ99} in the more general
	context of semiabelian varieties and mixed Shimura varieties. For an
	excellent treatment of these topics, we refer the readers to Zannier's book \cite{Zan}.
	Both the Bounded Height Theorem and the Pink-Zilber problem are also considered in the context of function
	fields in \cite{CGMM}.
	On the other hand, very little is known in the context of arithmetic dynamics. Zhang 
	\cite{ZhangDist}  proposed a dynamical analogue of the Manin-Mumford conjecture, which was later amended in \cite{GTZ} and more recently in \cite{Yuan-Zhang}. 
	
	This paper is the first to establish a dynamical analogue of Theorem~\ref{thm:2 in 1}. 
	For $d\geq 2$, the Chebyshev polynomial
	$C_d$ is the unique polynomial of degree $d$ such that    $C_d\left(x+\frac{1}{x}\right)=
	x^d+\frac{1}{x^d}$. Following the terminology in Medvedev-Scanlon 
	\cite{MS2014},
	we say that a polynomial $f\in\Qbar[x]$ of degree $d\geq  2$ 
	is \emph{disintegrated} if it is not linearly conjugate
	to $x^d$ or $\pm C_d(x)$. Let $n\geq 2$ and let $f_1,\ldots,f_n\in\Qbar[x]$ be polynomials
	of degrees at least 2. Let $\varphi=f_1\times\ldots\times f_n$
	be the induced coordinate-wise self-map of $(\bP^1)^n$. According to \cite[Theorem~2.30]{MS2014}, 
	to study
	the arithmetic dynamics of $\varphi$, it suffices to study 
	two cases: the case when all the $f_i$'s are not disintegrated which reduces to 
	diophantine questions on $\Gmn$ (as studied by Bombieri-Masser-Zannier \cite{BMZ99, BMZ_TAMS06, BMZ07, BMZ_Scuola08}) and the case when all the $f_i$'s are disintegrated.

	It is the dynamics of $\varphi$ in the case where $f_i$ is
	disintegrated for $1\leq i\leq n$ for which we prove an analogue of Theorem~\ref{thm:2 in 1}. In 
	fact, one of the main theorems of \cite{IMRN2013} provides
	a bounded height result when we intersect a fixed \emph{curve} with 
	\emph{periodic 
	hypersurfaces}. Our main results in this paper (see Theorem~\ref{thm:informal}
	and Theorem~\ref{thm:open}) not only solve
	 completely a bounded height problem for a general ``dynamical complementary
	 dimensional intersection'' (similar to Theorem~\ref{thm:2 in 1} (b)), but also establish a structure theorem
	 similar to Theorem~\ref{thm:2 in 1} (a).
	
	An irreducible subvariety $V$ of $(\bP^1)^n$ is said to be 
	\emph{periodic} (or better, $\varphi$-periodic)
	if there exists an integer $m>0$ such that
	$\varphi^{m}(V)=V$. If
	there exists $k\geq 0$ such that $\varphi^k(V)$
	is periodic then we say that $V$ is \emph{preperiodic} (or
	better, $\varphi$-preperiodic). 
	While it is most natural to regard periodic subvarieties as
	a dynamical analogue of \emph{irreducible} algebraic subgroups, the
	first major obstacle is to come up with an analogue of
	\emph{arbitrary translates} of subgroups (which were the special subvarieties of $\Gmn$). Motivated by \cite[Theorem 1.2]{IMRN2013},
	we let $C$ be the curve $\zeta\times \bP^1$ in $(\bP^1)^2$ (endowed with the diagonal action of a polynomial $f\in\Qbar[z]$) and
	we intersect $C$ with periodic hypersurfaces defined by $y=f^{\ell}(x)$
	for $\ell\geq 0$; then the resulting set will have unbounded height (if $\zeta$ is not preperiodic). Hence, in order to establish a dynamical analogue of Theorem~\ref{thm:2 in 1}, we have to exclude certain varieties having a constant projection
	to some factor $(\bP^1)^m$ of $(\bP^1)^n$.   
	
	
	Let $I:=\{1,\ldots,n\}$. For every non-empty subset $J$ of 
	$I$, let $\varphi^J$ be the 
	coordinate-wise self-map of $(\bP^1)^{|J|}$ induced by 
	the polynomials $f_j$ for $j\in J$.
  We say that an irreducible subvariety $Z$ of $(\bP^1)^n$ is 
	$\varphi$-special if there is a subset $J$ of $I$ such that after a possible rearrangement of the factors of 
	$(\bP^1)^n$, $Z$ has the form $\zeta\times V$, where $\zeta\in 
	(\bP^1)^{n-|J|}(\Qbar)$ and $V\subseteq (\bP^1)^{|J|}$  is $\varphi^J$-periodic.  We refer the readers to Definition~\ref{def:dyn special} for a formal definition of special subvarieties for the dynamics of $\varphi$. 
	We remark that we got our inspiration for defining the $\varphi$-special subvarieties from the classical case of special subvarieties of $\bG_m^n$ (see \cite{BMZ99}) since the irreducible periodic subvarieties  are the analogue of irreducible algebraic groups, while $\zeta\times V$ we consider to be the analogue of a \emph{translation}.  
We also note that using the above definition for $\varphi$-special subvarieties  when each $f_i(x)$ is the same power map (i.e. $f_i(x)=x^d$ for some $d\geq 2$), we get arbitrary translates of subtori (since $\zeta$ need not be a root of unity); 
	 yet some translates of tori are not $\varphi$-special as defined above. 
    This should not
		be a surprise though since it is well-known that $x^d$ (and $\pm C_d(x)$)
		have very different dynamical behavior compared to disintegrated
		polynomials. For example, while a subtorus of codimension $1$ can be 
		described by an equation involving every variable, a theorem of
		Medvedev-Scanlon (see Section~\ref{sec:MS theorem}) asserts that
		when each $f_i(x)$ is disintegrated, every $\varphi$-periodic hypersurface
		is described by an equation involving only at most two variables. Next we define a dynamical 
		analogue of anomalous subvarieties and of the set $X^{\oa}$:

\begin{definition}\label{def:dyn anomalous}
Let $n\geq 2$, let $f_1(x),\ldots,f_n(x)\in \Qbar[x]$ be disintegrated polynomials of degrees at least 2, 
and let $\varphi:=f_1\times\ldots\times f_n$ be as before. For an irreducible subvariety $X\subseteq (\bP^1)^n$, an irreducible subvariety $Y\subseteq X$ is called $\varphi$-anomalous (with respect to $X$) 
if there exists an irreducible $\varphi$-special subvariety $Z\subseteq (\bP^1)^n$ such that
\begin{equation}
\label{definition anomalous}
Y\subseteq X\cap Z\text{ and }\dim(Y) > \max\{0,\dim(X)+\dim(Z)-n\}.
	\end{equation}
%
Define $X^{\oa}_{\varphi}$ to be the complement in $X$ of the union of all  $\varphi_n$-anomalous subvarieties of $X$. 
\end{definition}
For each $0\leq r\le n$, we let $\Per_{\varphi}^{[r]}$ be the union of all irreducible $\varphi$-periodic subvarieties of $(\bP^1)^n$ of \emph{codimension} $r$.
When the map $\varphi$ (hence the collection $\{f_1,\ldots,f_n\}$) is clear from the context, 
we will use the notation $\Per^{[r]}$. Our first main result is the following
dynamical analogue of the Bounded Height Theorem:

\begin{theorem}\label{thm:informal}
		Let $n\geq 2$, let $f_1(x),\ldots,f_n(x)\in \Qbar[x]$ be disintegrated polynomials of degrees at least 2, 
and let $\varphi:=f_1\times\ldots\times f_n$ be as before.
		Let $X$ be an irreducible subvariety of $(\bP^1)^n$ of dimension $r$.  
    Then the set $X^{\oa}_{\varphi}\cap \Per^{[r]}$ has bounded height.
\end{theorem}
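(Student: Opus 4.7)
The plan is to exploit the Medvedev-Scanlon structure theorem for $\varphi$-periodic subvarieties to obtain a combinatorial description of $\Per^{[r]}$ and then combine this with induction on $n$ together with properties of canonical heights.

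First, I would use the Medvedev-Scanlon classification to place each irreducible $\varphi$-periodic subvariety $V\subseteq(\bP^1)^n$ of codimension $r$ in a normal form. Since each $f_i$ is disintegrated, every $\varphi$-periodic hypersurface is defined by an equation involving at most two of the coordinates. Consequently one can attach to $V$ a subset $I_0\subseteq I=\{1,\ldots,n\}$ of coordinates pinned to $f_i$-preperiodic values, together with a graph on $I\setminus I_0$ whose edges correspond to periodic plane curves linking pairs of coordinates. This stratifies $\Per^{[r]}$ into finitely many \emph{combinatorial types}, so it suffices to prove the bounded height statement within one such stratum at a time.

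Second, I would reduce to a minimal configuration. If the projection of $X$ to some $\bP^1$-factor is a single point $\zeta_i$, then $\{\zeta_i\}\times (\bP^1)^{n-1}$ is $\varphi$-special, and a standard dimension count either places the intersection with $\Per^{[r]}$ inside the $\varphi$-anomalous locus of $X$, or else reduces the problem to the smaller-dimensional ambient space by induction on $n$; if $\zeta_i$ is $f_i$-preperiodic, then $h(\zeta_i)$ is automatically bounded and one may split it off. Hence we may assume every coordinate projection of $X$ is dominant.

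Third, the heart of the argument is the bounded-height estimate within a fixed combinatorial type $\tau$. For $P=(P_1,\ldots,P_n)\in X\cap V$ with $V$ of type $\tau$ and $i\in I_0$, one has $\hat h_{f_i}(P_i)=0$, so $h(P_i)$ is uniformly bounded independently of $V$. For indices $i,j$ lying on a common edge of the associated graph, Medvedev-Scanlon forces $(P_i,P_j)$ to lie on a periodic plane curve of one of finitely many shapes (essentially graphs of rational maps commuting with a common iterate of $f_i$ and $f_j$, possibly composed with some $f_j^\ell$), which tightly couples $h(P_i)$ and $h(P_j)$ through the canonical heights. The codimension-$r$ condition then supplies exactly enough such constraints to propagate the bound through every connected component of the graph, and the $\varphi$-anomalous exclusion on $X$ ensures that the system of equations cutting out $X\cap V$ is transversal in the appropriate sense so that $P$ does not escape to infinity in the canonical-height metric.

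The principal obstacle is uniformity across the infinite family of periodic subvarieties sharing a given combinatorial type: for instance the ``level'' parameter $\ell$ in relations of the form $x_j=f_j^{\ell}(\psi(x_i))$ makes the defining equations of $V$ themselves of unbounded complexity as $\ell\to\infty$. Handling this requires showing that the coupling between $h(P_i)$ and $h(P_j)$ stays uniformly Lipschitz in the appropriate canonical-height coordinates, so that the transversality forced by the $\varphi$-anomalous hypothesis yields a bound independent of $\ell$. Once this uniform coupling is established, summing the contributions of $I_0$ and of the connected components of the graph gives the desired bound on $h_n(P)$, completing the proof.
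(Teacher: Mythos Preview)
Your overall architecture matches the paper's: stratify $\Per^{[r]}$ by the combinatorial data coming from Medvedev--Scanlon (the paper calls these \emph{signatures}), reduce via projections so that $X$ maps onto any $r$ factors, and then run an induction. Where your proposal has a genuine gap is in the resolution of what you correctly identify as the ``principal obstacle'', and in the precise way the $\varphi$-anomalous hypothesis is invoked.

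You propose to handle the unbounded parameter $\ell$ in relations $x_j=f_j^{\ell}(\psi(x_i))$ by showing that the coupling between $h(P_i)$ and $h(P_j)$ is ``uniformly Lipschitz in canonical-height coordinates''. This is not what happens, and in fact cannot happen: the periodic constraint gives $\hat h_{f}(P_j)=\deg(g)\cdot \hat h_{f}(P_i)$ with $\deg(g)$ growing like $d^{\ell}$, so the coupling is wildly non-Lipschitz as $\ell\to\infty$. The paper does not tame this ratio; it \emph{exploits} it via a dichotomy. Within a fixed signature, one singles out in each chain $J_k$ the ``terminal'' coordinate $x_{k,m_k}$ and sets $D(V)=\min_k \deg(g_{k,m_k})$. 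If $D(V)$ is large, the periodic equations force $\sum_k \hat h_f(\alpha_{k,m_k})$ to dominate the remaining $r$ coordinates by a factor $\gtrsim D(V)$; but projecting $X$ to the $r$ dominated coordinates together with each terminal one gives hypersurface equations $F_k=0$ that, after a height inequality of the shape $\hat h_f(\alpha_{k,m_k})\le c_3\sum_{i\in\Gamma}\hat h_f(\alpha_i)+c_4$, force the reverse domination with a \emph{fixed} constant depending only on $X$. These two inequalities are incompatible once $D(V)$ exceeds an explicit threshold, unless all canonical heights are bounded. If instead $D(V)$ is below the threshold, there are only finitely many possible polynomials $g_{k,m_k}$ (Proposition~2.3(d)), hence finitely many periodic hypersurfaces $H$ with $V\subseteq H$, and one passes to $e_H^{-1}(X\cap H)\subseteq(\bP^1)^{n-1}$ and invokes the induction hypothesis.

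Two further points. First, your use of the $\varphi$-anomalous hypothesis is too vague: the precise content needed is that for any $\alpha\in X^{\oa}_\varphi$ and any projection $\pi^J$ to $r+1$ factors, the defining polynomial $F^J$ of $\pi^J(X)$ has, when expanded in any one of its variables, at least one nonconstant coefficient that does not vanish at $\pi^J(\alpha)$ (otherwise the fibre over the remaining $r$ coordinates has positive dimension and yields an anomalous subvariety through $\alpha$). This is exactly what licenses the height inequality above. Second, you do not address the passage from arbitrary disintegrated $f_1,\ldots,f_n$ to the diagonal case $f\times\cdots\times f$; the paper handles this by a separate reduction using the equivalence relation on disintegrated polynomials induced by the existence of a common cover, which breaks $(\bP^1)^n$ into blocks on which the argument above runs verbatim.
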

	 
We give examples at the end of Section~\ref{sec:MS theorem} that it is 
necessary to 
omit the anomalous subvarieties of $X$. Theorem~\ref{thm:informal}
generalizes the results in \cite{IMRN2013} which only
treat the case that $X$ is a curve. Our second main result is
a dynamical analogue of the Structure Theorem:
\begin{theorem}\label{thm:open}
	With the notation as in Theorem~\ref{thm:informal}, we have that the set $X^{\oa}_{\varphi}$ is Zariski open in $X$. 

Also, there exists a collection (depending on $\varphi$ and $X$) consisting of finitely many (not necessarily distinct) subsets $J_1,\ldots,J_{\ell}$ of $I=\{1,\ldots,n\}$ 
together with $\varphi^{J_k}$-periodic subvarieties $Z_k\subseteq (\bP^1)^{|J_k|}$
for $1\leq k\leq \ell$
such that the following holds. The $\varphi$-anomalous locus of $X$ is the union of all anomalous subvarieties $Y$ for which, 
after a possible rearrangement of coordinates, there is a special subvariety $Z$ of the form $\zeta\times Z_k$  for some $1\leq k\leq \ell$ and $\zeta\in (\bP^1)^{n-|J_k|}(\Qbar)$ such that
$Y\subseteq X\cap Z$ and $\dim(Y)>\max\{0,\dim(X)+\dim(Z)-n\}$. 
\end{theorem}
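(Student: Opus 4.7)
The plan is to mimic the Bombieri--Masser--Zannier structure theorem, using the Medvedev--Scanlon classification of $\varphi$-periodic subvarieties as the discrete combinatorial input (replacing the lattice of subtori in the classical setting). I aim to show that only finitely many templates $(J_k, Z_k)$ contribute to the $\varphi$-anomalous locus of $X$, and that each contribution is already Zariski closed.

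First I recast the anomalous condition via projections. For each $J \subseteq I$, let $\pi_J \colon (\bP^1)^n \to (\bP^1)^{n-|J|}$ denote the projection away from the $J$-coordinates; as $\zeta$ varies, the family $\{\zeta \times V\}$ attached to a fixed $\varphi^J$-periodic $V \subseteq (\bP^1)^{|J|}$ sweeps out $\pi_J^{-1}(V)$. Thus the $(J,V)$-anomalous part of $X$ equals the union of those irreducible components $W$ of $X \cap \pi_J^{-1}(V)$ whose generic fiber under $\pi_J|_W$ has dimension strictly greater than $\max(0, r + \dim V - n)$. By upper semicontinuity of fiber dimension, each such $W$ is closed in $X$, so the contribution of any single $(J,V)$ is Zariski closed.

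To extract a finite list of contributing templates, I fix $J$ (there are only finitely many such) and analyze the candidate $V$'s. Monotonicity gives the first reduction: if $V \subseteq V'$, then $\pi_J^{-1}(V) \subseteq \pi_J^{-1}(V')$, so the $(J,V)$-contribution lies inside the $(J,V')$-contribution; only maximal contributing $V$'s matter. The Medvedev--Scanlon theorem describes each $\varphi^J$-periodic $V$ in terms of a partition of $J$ into linked blocks in which coordinates are either fixed or identified via polynomial relations $x_j = g_{jj'}(x_{j'})$. Combined with a Noetherian induction on $X \cap \pi_J^{-1}(V)$ (decreasing either the dimension or the number of components at each step), this yields finitely many maximal $(J_k, V_k)$, forming the desired list $(J_k, Z_k)$. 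Zariski openness of $X^{\oa}_\varphi$ then follows because the anomalous locus is a finite union of closed sets.

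The principal obstacle is this finiteness extraction. In the toric case it follows from the lattice structure of subtori and a finite-index argument; here the parameter space of $\varphi^J$-periodic subvarieties is countable but richer, and not lattice-structured. One must rule out an infinite ladder of pairwise-incomparable maximal $V$'s contributing to $X$, which would force $X$ itself to host an unbounded family of distinct periodic ``traces'' in its generic fibers. The strategy to preclude this is a descending induction on $|J|$: starting from any supposed infinite family $(V_m)_m$ of maximal contributors, passing to the generic fiber of $\pi_J|_X$ and invoking the Medvedev--Scanlon restriction that periodic hypersurfaces involve only two variables, one reduces to a lower-$|J|$ configuration where the inductive hypothesis supplies a contradiction.
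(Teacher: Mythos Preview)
Your proposal has a genuine gap at the crucial finiteness step, and the monotonicity claim you rely on is false.

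First, the monotonicity: you assert that if $V \subseteq V'$ are $\varphi^J$-periodic, then the $(J,V)$-contribution lies inside the $(J,V')$-contribution. This is not true, because the threshold $\max(0,r+\dim V-n)$ increases with $\dim V$. Concretely (cf.\ the paper's Example~\ref{example 1}), take $n=5$, $r=3$, and let $V$ be the periodic $3$-fold $x_2=f(x_1),\ x_3=f(x_2)$; a surface $S\subseteq X\cap V$ is anomalous since $\dim S=2>1=\max(0,3+3-5)$. But for the periodic $4$-fold $V'\supset V$ given by $x_2=f(x_1)$ the threshold is $\max(0,3+4-5)=2$, and $S$ is no longer anomalous for $V'$. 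So passing to ``maximal'' $V$'s discards genuine contributions, and in any case the maximal periodic subvariety of $(\bP^1)^{|J|}$ is $(\bP^1)^{|J|}$ itself, which only recovers the locus called $U_0$ in the paper and misses the rest.

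Second, even setting monotonicity aside, your ``Noetherian induction on $X\cap\pi_J^{-1}(V)$'' and the ``descending induction on $|J|$ via generic fibers'' do not supply any mechanism to rule out an infinite family of pairwise incomparable periodic templates. For instance, the hypersurfaces $x_2=f^m(x_1)$ for $m\ge 1$ are pairwise incomparable periodic hypersurfaces, and nothing in your sketch bounds~$m$. The Medvedev--Scanlon statement that periodic hypersurfaces involve two variables tells you the \emph{shape} of the equations but says nothing about the \emph{degree} of the polynomials $g$ appearing in $x_j=g(x_i)$; it is precisely this degree that must be bounded.

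The paper's proof obtains this bound by an arithmetic height argument, not by pure Noetherian or combinatorial reasoning. After stripping off the closed pieces $U_0$ (from special varieties $\zeta\times(\bP^1)^J$, handled by fibre-dimension semicontinuity as you do) and $U_\infty$ (from hypersurfaces at infinity, handled by induction), one takes any remaining anomalous $Y\subseteq X\cap Z$ and chooses points $(a_1,\ldots,a_n)\in Y$ with $\hhat_f(a_n)$ arbitrarily large but $\hhat_f(a_{n-\delta+1}),\ldots,\hhat_f(a_{n-1})$ arbitrarily small (possible because $Y$ projects onto $(\bP^1)^\delta$). Feeding these into the height inequality of Corollary~\ref{cor:key inequality}, applied to the defining equations of the hypersurfaces $\pi^{\Gamma_i}(X)$, forces at least one chain polynomial $g_{i,m_i}$ in the Medvedev--Scanlon description of $Z$ to have degree bounded by a constant $M$ depending only on $X$. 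Since only finitely many $g$ of degree $\le M$ commute with an iterate of $f$ (Proposition~\ref{prop:all_g}), $Z$ lies in one of finitely many periodic hypersurfaces $V\in\mathcal{V}$, and one finishes by applying the induction hypothesis to the components of $e_V^{-1}(X\cap V)$. This height input is the missing idea in your plan.
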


\begin{remark}
\label{effective remark}
As pointed out by the referee, the height bound in Theorem~\ref{thm:informal} and the equations defining the subvarieties $Z_i$ in Theorem~\ref{thm:open} are effectively computable since our arguments are effective.
\end{remark}

It is also possible to ask a variant of the above theorems for preperiodic
subvarieties. We define $\varphi$-pre-special subvarieties 
to be those of the form $\zeta\times Z$ where $\zeta\in (\bP^1)^{n-|J|}(\Qbar)$
and $Z\subseteq (\bP^1)^{|J|}$
is $\varphi^{J}$-preperiodic for some subset $J$ of $\{1,\ldots,n\}$. For an irreducible subvariety
$X\subseteq (\bP^1)^n$, we define $\varphi$-pre-anomalous subvarieties as
in Definition~\ref{def:dyn anomalous} where the only change is that $Z$ is required
to be $\varphi$-pre-special rather than $\varphi$-special. Similarly,
we define $X^{\oa,\pre}_{\varphi}$ to be
the complement of the union of all $\varphi$-pre-anomalous 
subvarieties in $X$.
Finally, we 
use the notation $\Pre_{\varphi}^{[r]}$
(or $\Pre^{[r]}$ if $\varphi$ is clear)
to denote the union of all $\varphi$-preperiodic
subvarieties of codimension $r$. 
We expect the following to have an affirmative answer:
\begin{question}\label{q:preperiodic case}
	Let $n$, $f_1,\ldots,f_n$, $\varphi$, $X$ and $r$ be as in Theorem~\ref{thm:informal}.
	\begin{itemize}
		\item [(a)] Does the set $X^{\oa,\pre}_{\varphi}\cap \Pre^{[r]}$ have
		bounded height?
		\item [(b)] Is the set $X^{\oa,\pre}_{\varphi}$
		 Zariski open in $X$?
	\end{itemize} 
\end{question}

Note that Question~\ref{q:preperiodic case} is \emph{neither stronger nor weaker}
than our main theorems. The set $\Pre^{[r]}$ is larger than $\Per^{[r]}$, yet
the set $X^{\oa,\pre}_{\varphi}$ is
smaller than $X^{\oa}_{\varphi}$. Note that 
the largest intersection 
$X^{\oa}_{\varphi}\cap \Pre^{[r]}$ cannot have bounded height. 
As an easy example, we let $n=2$, $f_1=f_2=:f$, and let $X$ be a periodic but not preperiodic curve in $(\bP^1)^2$
having non-constant projection to each factor $\bP^1$ (for instance,
an irreducible component of the curve $f(x)=f(y)$ other than the diagonal $x=y$). In this case,
we have $X^{\oa}_{\varphi}=X\subset \Pre^{[r]}$.


The two main 
	ingredients for the
	proof of Theorem~\ref{thm:informal} is the classification of $\varphi$-periodic
	subvarieties by Medvedev-Scanlon presented in the next section and 
	an elementary height inequality (see Corollary~\ref{cor:key inequality}). The proof of 
	Theorem~\ref{thm:open} also uses certain bounded
	height arguments that are similar to those in the proof of Theorem~\ref{thm:informal}. 
	One natural way to attack Question~\ref{q:preperiodic case} is to
	relate it to results in the periodic case such as Theorem~\ref{thm:informal}
	and Theorem~\ref{thm:open} since for every preperiodic subvariety $V$ we have
	$\varphi^k(V)$ is periodic for some $k$. The difficulty is to obtain a good 
	bound
	(perhaps uniform in the
	height of $X$) on the inequalities obtained in the proof of Theorems \ref{thm:informal}
	and \ref{thm:open}. For more details when $X$ is a curve, we refer the readers to
	\cite{IMRN2013}.
	

In this paper, we first provide all the details for the proof
of Theorem~\ref{thm:informal} and Theorem~\ref{thm:open}
in the special yet essential case of the self-map
$f\times\ldots\times f$ on $(\bP^1)^n$ (i.e. when $f_1=\ldots=f_n=f$). The general case $\varphi=f_1\times\ldots f_n$ is explained
in Section~\ref{sec:general} and consists of two steps. First, we reduce
 $\varphi$ to a map of the form $\psi=\psi_1\times\ldots\times\psi_s$ 
for some $1\leq s\leq n$ where $\psi_i$
is a coordinate-wise self-map of $(\bP^1)^{n_i}$ of the form
$w_i\times\ldots\times w_i$ for some $1\leq n_i\leq n$ and disintegrated
$w_i(x)\in\Qbar[x]$ such that $\sum_{i=1}^s n_i=n$ and $w_i$ and $w_j$ are
``inequivalent'' for $i\neq j$ (see Definition~\ref{def:relation}
and the discussion following Definition~\ref{def:relation}). Then maps $\psi$  (as above) could be treated by a completely similar arguments used 
to settle the special case $\varphi=f\times\ldots\times f$ albeit with a more
complicated system of notation for bookkeeping.    

%
%

In fact, we can define the sets $X^{\oa}_{\phi}$
  and $X^{\oa,\pre}_{\phi}$
  and formulate the dynamical bounded height and structure theorems for 
  the more general self-map of $(\bP^1)^n$
  of the form $\phi=f_1\times\ldots\times f_n$
  where each $f_j(x)\in \Qbar(x)$ is 
  a ``disintegrated \emph{rational} function''. The readers are referred to 
  Question~\ref{q:general} and the Remarks~\ref{rem:rational f} and \ref{last general remark} for more details.
In view of our main results, another possible direction of research is to formulate a weak dynamical form of
the classical Pink-Zilber Conjecture by asking that if $X\subseteq (\bP^1)^n$ is not contained
in a $\varphi$-periodic (resp. $\varphi$-preperiodic) hypersurface, then $X \cap \Per^{[r+1]}$ (resp. $X\cap \Pre^{[r+1]}$) is not Zariski dense in $X$, 
where $r=\dim(X)$. In the
case of hypersurfaces in $(\bP^1)^n$, this restricts to the dynamical Manin-Mumford problem
formulated in \cite{ZhangDist}. The case of lines $X\subseteq (\bP^1)^2$ was already proven
in \cite{GTZ}, and in light of the observations made in \cite[Section~3]{Yuan-Zhang}, one might
expect that the general case of curves, and perhaps even the case of arbitrary subvarieties
of $(\bP^1)^n$ holds when each coordinate of $\bP^1$ is acted on by a \emph{disintegrated polynomial}.
However, the scarcity of positive results for the Dynamical Manin-Mumford
Conjecture and also, the exotic behavior of certain examples produced in \cite{GTZ} 
when different \emph{rational functions} $f_i$ act on the coordinates of $(\bP^1)^n$ prevent
us from formally stating a conjecture for the unlikely intersection principle in dynamics.

	\emph{For the rest of this paper, we do not refer to the notion of special and anomalous subvarieties of $\Gmn$ given by Bombieri, Masser and Zannier. For simplicity, when the self-map $\varphi$ is clear from the context,
	we use the terminology special (resp. pre-special, anomalous, pre-anomalous)
	subvarieties instead of $\varphi$-special (resp. $\varphi$-pre-special, $\varphi$-anomalous, $\varphi$-pre-anomalous) subvarieties.}

	The organization of this paper is as follows. In Section~\ref{sec:MS theorem}, following \cite{MS2014} and \cite{IMRN2013} we give a precise description of the $\varphi$-periodic subvarieties of $(\bP^1)^n$ in the special
	case $f_1=\ldots=f_n$. The proof of Theorems 
	\ref{thm:informal} and \ref{thm:open} in that case takes up the following four sections. This 
	proof uses 
	certain properties of height and canonical height in Section~\ref{sec:height} 
	coupled with some elementary geometric properties
	of the set $X^{\oa}_{\varphi_n}$ in Section~\ref{sec:elementary geometry}. In the
	last section, we explain how to prove Theorem~\ref{thm:informal}
	and Theorem~\ref{thm:open} in general and 
	conclude the paper with a brief discussion on the dynamics of more 
	general self-maps of $(\bP^1)^n$ of the form
	$f_1\times\ldots\times f_n$ where each $f_i(x)\in\Qbar(x)$ is a ``disintegrated rational
	map''.

	{\bf Acknowledgments.}  We thank Tom Scanlon and
	Tom Tucker for many useful conversations. We are grateful to the anonymous referee
	for many useful suggestions.

	
 \section{Structure of periodic subvarieties}\label{sec:MS theorem}	

Recall that a polynomial $f\in \Qbar[x]$ of degree $d\geq 2$ is called disintegrated
if it is not linearly conjugate to $x^d$ or $\pm C_d(x)$. Let $n$ be a positive integer and
$f_1,\ldots,f_n\in\Qbar[x]$ be disintegrated polynomials of degrees at least 2. Let
$\varphi=f_1\times\ldots\times f_n$ be the corresponding coordinate-wise self-map
of $(\bP^1)^n$.
Let $I_n:=\{1,\ldots,n\}$. For each \emph{ordered}
	subset $J$ of $I_n$, we define:
	$$(\bP^1)^J:=(\bP^1)^{|J|}$$  
	equipped with the canonical projection $\pi^J:\ (\bP^1)^n\rightarrow (\bP^1)^J$. 
	Occasionally, we also work with the Zariski open subset $(\bA^1)^n=\bA^n$
	of $(\bP^1)^n$ and we use the notation $\bA^J$ to denote the Zariski
	open subset $(\bA^1)^{|J|}$ of $(\bP^1)^J$. Obviously, $\bA^J=\pi^J(\bA^n)$. 
	Let $\varphi^J$ denote the coordinate-wise self-map of $(\bP^1)^J$
	induced by the polynomials $f_j$'s for $j\in J$.
	In this paper,
	we will consider ordered subsets of $I_n$ whose orders need not be induced from the 
	usual order
	of the set of integers.
	If $J_1,\ldots,J_m$
	are ordered subsets of $I_n$ which partition $I_n$, then
	we have the canonical isomorphism:
	$$(\pi^{J_1},\ldots,\pi^{J_m}):\ (\bP^1)^n=(\bP^1)^{J_1}\times\ldots\times (\bP^1)^{J_m}.$$
	For each irreducible subvariety $V$ of $(\bP^1)^n$, let $J_V$ denote the 
	set of all $j\in I_n$ such that the projection from $V$ to the $j^\text{th}$ coordinate $\bP^1$
	is constant. If $J_V\neq \emptyset$, we equip $J_V$  with the natural order of the set of 
	integers, and we let $a_V\in (\bP^1)^{J_V}(\Qbar)$ denote $\pi^{J_V}(V)$.
	Even when $J_V=\emptyset$, we will \emph{vacuously} define $(\bP^1)^{J_V}$ as
	the scheme of one point $\Spec(\Qbar)$ and define $a_V$ to be that point.
	We have then the following formal definition for \emph{special} subvarieties.  
	\begin{definition}\label{def:dyn special}
	   Let $Z$ be an irreducible subvariety of $(\bP^1)^n$. Identify
	   $(\bP^1)^n=(\bP^1)^{J_Z}\times (\bP^1)^{I_n-J_Z}$. We say that $Z$ is $\varphi$-special 
	   (respectively $\varphi$-pre-special) if it has the form $a_Z\times Z'$
	   	where $Z'$ is $\varphi^{I_n-J_Z}$-periodic (respectively preperiodic).
	\end{definition}
	

	We now present (a crucial case of) the Medvedev-Scanlon
	classification of $\varphi$-periodic subvarieties of $(\bP^1)^n$ from 
	\cite{MS2014} along with its refinement from \cite{IMRN2013}. The complete
	classification in \cite{MS2014} consists of two parts: the first part treats the special 
	case 
	$f_1=\ldots=f_n$ which is given in this section while the second part
	explains why the general case reduces to this special case and is given in
	Section~\ref{sec:general}. We have the following:
	
	\begin{assume}\label{assume:same maps}
	For the rest of this section, assume that $f_1(x)=\ldots=f_n(x)$ and denote this
	common polynomial by $f(x)$. Since the
	map $\varphi^J$ on $(\bP^1)^{J}$ now depends
	only on $|J|$, for every positive integer $m$ we usually use the
	notation 
	$\varphi_m:=\varphi_{m,f}$ to denote the 
	 self-map $f\times\ldots\times f$ on $(\bP^1)^m$. In particular,
	 $\varphi=\varphi_n$ and $\varphi^J=\varphi_{|J|}$; such notation also emphasizes the
	 fact that we are assuming $f_1=\ldots=f_n=f$.
	\end{assume}

	 Let $x_1,\ldots,x_n$ denote the coordinate functions of the factors $\bP^1$ of 
	 $(\bP^1)^n$. Medvedev and Scanlon prove the following important result 
	 \cite[pp.~85]{MS2014}: 
				 \begin{theorem}\label{thm:MS theorem}
				 	Let $V$ be an irreducible $\varphi_n$-periodic  
				 	subvariety of $(\bP^1)^n$.  
				 	Then
				 	$V$ is given by a collection of equations of the following forms: 
				 		\begin{itemize}
				 			\item [(a)] $x_i=\zeta$ where $\zeta$
				 			is a periodic point of $f$.
				 			\item [(b)] $x_j=g(x_i)$ for some $1\leq i\neq j\leq n$, 
				 			where $g$ is a polynomial commuting with an iterate of $f$. 
				 		\end{itemize}
				 \end{theorem}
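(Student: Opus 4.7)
The plan is to proceed by induction on $n$ after first absorbing all constant projections of $V$. Let $J\subseteq I_n$ be the set of indices $i$ for which $\pi^{\{i\}}(V)=\{\zeta_i\}$ is a single point. Since $\varphi_n^m(V)=V$ for some $m\geq 1$ and projections intertwine $\varphi_n$ with $f$, each $\zeta_i$ satisfies $f^m(\zeta_i)=\zeta_i$ and is therefore $f$-periodic; these coordinates yield the equations of type (a). After discarding them, we may assume $V$ projects dominantly to every $\bP^1$-factor and is an irreducible $\varphi_{|I_n\setminus J|}$-periodic subvariety.

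The base case $n=2$ asserts that an irreducible $(f\times f)$-periodic curve $C\subseteq (\bP^1)^2$ with dominant projections has the form $x_2=g(x_1)$ or $x_1=g(x_2)$ for some polynomial $g$ commuting with an iterate of $f$. I would pass to the normalization $\widetilde{C}$: a suitable iterate $(f\times f)^m$ stabilizes $C$ and lifts to a self-map $\sigma$ of $\widetilde{C}$, and each projection $p_i\colon \widetilde{C}\to \bP^1$ satisfies $f^m\circ p_i=p_i\circ\sigma$. Since a polynomial has only one pole, analyzing the fibres $p_i^{-1}(\infty)$ forces each $p_i$ to be polynomial-like and $\widetilde{C}$ to be rational. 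A Ritt-style decomposition analysis of these two semi-conjugacies, combined with the disintegrated hypothesis ruling out the $x^d$- and Chebyshev-exceptions, then forces at least one $p_i$ to have degree one; taking it as a coordinate realizes $C$ as the graph of a polynomial $g$ that commutes with $f^m$.

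For the inductive step $n\geq 3$, let $W\subseteq (\bP^1)^n$ be the scheme defined by all the type-(a) and type-(b) equations that $V$ satisfies, where the type-(b) equations are produced by applying the $n=2$ case to each non-dominant pairwise projection $\pi^{\{i,j\}}(V)$. Then $V$ is contained in an irreducible $\varphi_n$-periodic component $W'$ of $W$, and the goal becomes the equality $V=W'$. \emph{This equality is the main obstacle}, and it is exactly the deep content of the Medvedev--Scanlon theorem: the disintegrated hypothesis says, in the model-theoretic language of \cite{MS2014}, that the generic type of an orbit of $f$ is \emph{trivial}, and so every algebraic relation on $n$ independent orbit-points is already generated by its binary restrictions. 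A direct geometric route would attempt to compare $\dim V$ and $\dim W'$ by exhibiting enough $\varphi_n$-periodic points on $W'$ and forcing them onto $V$ via an equidistribution- or intersection-theoretic argument; but any successful reduction appears to require either the Zariski-geometries framework of \cite{MS2014} or substantial new geometric input, which is why the paper is content to quote the result rather than reprove it.
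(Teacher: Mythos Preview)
The paper does not prove this theorem at all; it is quoted verbatim from Medvedev--Scanlon \cite[p.~85]{MS2014} as an imported result, with no argument supplied. So there is no ``paper's own proof'' to compare against---only a citation.

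Your proposal is therefore more ambitious than the paper: you attempt a sketch where the paper simply cites. The reduction to dominant projections and the extraction of the type-(a) equations are correct and routine. Your $n=2$ sketch is in the right spirit---the curve case can be approached by classical means (normalization, analysis of the fibre over $\infty$, Ritt's theory of polynomial decompositions)---but the sentence ``a Ritt-style decomposition analysis \ldots\ forces at least one $p_i$ to have degree one'' is hiding genuine work, and as stated is not a proof. For the inductive step you are entirely candid: the assertion $V=W'$, i.e.\ that every $\varphi_n$-periodic subvariety is cut out by its binary projections, is exactly the triviality statement that is the heart of \cite{MS2014}, and you correctly identify that no elementary bypass is available.

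In short, your proposal and the paper agree on the bottom line: Theorem~\ref{thm:MS theorem} is a deep input from \cite{MS2014}, not something proved here. Your write-up is an honest outline of \emph{why} it is deep rather than a proof, and you say so yourself in the final paragraph.
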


	Next we describe all polynomials $g$ commuting with an
	iterate of $f$ mentioned in Theorem~\ref{thm:MS theorem}; the following result is \cite[Proposition~2.3]{IMRN2013}. 
				\begin{proposition}\label{prop:all_g}
					We have:
					\begin{itemize}
						\item [(a)] If $g\in \Qbar[x]$ has degree at least $2$ such that 
						$g$
						commutes with an iterate of $f$ then $g$ and $f$
						have a common iterate.
						\item [(b)] Let $M(f^\infty)$
						denote the collection of all linear polynomials
						commuting with an iterate of $f$. Then $M(f^\infty)$
						is a finite cyclic group under composition.
						\item [(c)] Let $\tilde{f}\in \Qbar[x]$ be a polynomial
						of minimum degree $\tilde{d}\ge 2$   such that $\tilde{f}$
						commutes with an iterate of $f$. Then there exists
						$D=D_f>0$ relatively prime to
						the order of $M(f^\infty)$ such that 
						$\tilde{f}\circ L=L^D\circ \tilde{f}$
						for every $L\in M(f^\infty)$.
						\item [(d)] 
						$\left\{\tilde{f}^m\circ L:\ m\geq 0,\ L\in M(f^\infty)\right\}=\left\{L\circ\tilde{f}^m:\ m\geq 0,\ L\in M(f^\infty)\right\}$, and these sets describe exactly all
						polynomials $g$ commuting with an iterate of $f$. As a consequence,
						there are only finitely many polynomials of bounded degree commuting
						with an iterate of $f$.		
					\end{itemize}
				\end{proposition}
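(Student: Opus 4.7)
The plan is to combine Ritt's classical theorem on commuting polynomials with a rigidity analysis of the centralizer of $f$, using the minimality of $\tilde f$ to propagate structure. Ritt's theorem states that two polynomials of degree $\geq 2$ that commute under composition either share an iterate, or are both conjugate (by a single linear map) to power maps $x^{d_i}$, or are both conjugate to $\pm C_{d_i}$. Because disintegration is preserved under iteration, $f^k$ remains disintegrated for all $k\geq 1$; hence any $g$ of degree $\geq 2$ commuting with some $f^k$ must share a common iterate with $f^k$ and therefore with $f$. This proves part (a).

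For part (b), $M(f^\infty)$ is closed under composition, since $L_i$ commuting with $f^{a_i}$ implies $L_1\circ L_2$ commutes with $f^{\mathrm{lcm}(a_1,a_2)}$. Any $L\in M(f^\infty)$ permutes the finite fixed-point set of some iterate $f^k$, and an affine self-map of $\mathbb{A}^1$ is determined by its values at two points, so $M(f^\infty)$ is finite. A finite subgroup of $\mathrm{Aff}(\mathbb{A}^1)$ fixes the barycenter of any orbit, hence is conjugate to a finite subgroup of $\bar{\Q}^*$, which is cyclic. After performing this conjugation, every $L\in M(f^\infty)$ takes the form $L(x)=\zeta_L x$ with $\zeta_L\in\mu_e$, where $e=|M(f^\infty)|$.

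For (c) and (d), let $\tilde f$ have minimal degree $\tilde d\geq 2$ in the set $S$ of polynomials commuting with some iterate of $f$. Because $S$ is closed under composition with elements of $M(f^\infty)$ on either side (combine the witness iterates), $\tilde f\circ L\in S$ for each $L$. The central rigidity step is to show $\tilde f\circ L = L'\circ \tilde f$ for some $L'\in M(f^\infty)$. Granting this, the assignment $L\mapsto L'$ is an endomorphism of the cyclic group $M(f^\infty)$ and so has the form $L\mapsto L^D$; comparing leading terms forces $D\equiv \tilde d\pmod e$, and excluding the possibility that $\tilde f$ is invariant under a nontrivial element of $M(f^\infty)$ --- which, after writing $\tilde f(x)=h(x^{d'})$ for $d'=\gcd(D,e)$, would produce a strictly shorter member of $S$ contradicting the minimality of $\tilde d$ --- forces $\gcd(D,e)=1$. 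This establishes (c). For (d), given $g\in S$ with $\deg g\geq 2$, part (a) supplies a common iterate $g^a=\tilde f^b$; combined with the minimality of $\tilde d$ and a Ritt-type factorization argument on iterates, this forces $\deg g=\tilde d^m$ and $g=\tilde f^m\circ L$ for some $m\geq 1$ and $L\in M(f^\infty)$. The two descriptions in (d) coincide by iterating the commutation relation from (c), and the finiteness of bounded-degree members of $S$ is immediate from the explicit parametrization $(m,L)\mapsto \tilde f^m\circ L$.

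The principal obstacle is the rigidity step itself: showing that an element of $S$ of the same degree as $\tilde f$ must be of the form $L\circ \tilde f$. My preferred route is to pass to the Böttcher coordinate of a suitably chosen iterate $f^N$: in that coordinate, every polynomial commuting with $f^N$ becomes a monomial $\zeta z^k$, and the rigidity of the multiplicative structure of $\mu_\infty$ forces two such polynomials of equal degree to differ only by a root-of-unity factor, which translates back to composition with an element of $M(f^\infty)$. This is where the hypothesis that $f$ is disintegrated is crucially used, since the Böttcher classification of the centralizer of $f^N$ by monomial maps fails precisely when $f^N$ itself is linearly conjugate to a power or Chebyshev map.
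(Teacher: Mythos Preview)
The paper does not supply its own proof of this proposition; it simply quotes the result as \cite[Proposition~2.3]{IMRN2013}. So there is no in-paper argument to compare against. Your overall strategy---Ritt's theorem for (a), a symmetry argument for finiteness in (b), and a rigidity/B\"ottcher analysis for (c) and (d)---is the standard route, and Remark~\ref{rem:effective g} of the paper confirms that the normal-form approach to (b) is what the authors have in mind.

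That said, your argument for (b) has a genuine gap. You observe that each $L\in M(f^\infty)$ permutes $\mathrm{Fix}(f^{k})$ for \emph{some} $k$ depending on $L$, and conclude finiteness. But different $L$'s commute with different iterates, and $|\mathrm{Fix}(f^k)|$ grows with $k$; your argument only shows each individual $L$ has finite order, and infinite torsion subgroups of $\mathrm{Aff}(\A^1)$ certainly exist (e.g.\ all roots of unity acting by scaling). You need a bound independent of $k$. The fix is exactly the computation in Remark~\ref{rem:effective g}: normalize $f(x)=x^d+a_{d-r}x^{d-r}+\cdots$ with $a_{d-r}\neq 0$, and check that any $L(x)=\alpha x$ commuting with $f^k$ is forced to satisfy $\alpha^r=1$ regardless of $k$.

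A second point: in your argument that $\gcd(D,e)=1$, you claim that if a nontrivial $L_0$ lies in the kernel of $L\mapsto L^D$ then $\tilde f(x)=h(x^{d'})$ yields a strictly shorter member of $S$. But it is not clear that $h$ itself commutes with any iterate of $f$; the relation $\tilde f\circ f^N=f^N\circ\tilde f$ does not descend to $h$ in an obvious way. Your B\"ottcher-coordinate idea is the right fix and should be used for this step too: once everything in the centralizer of $f^N$ is written as $\zeta z^k$ in the B\"ottcher coordinate, both the existence of $L'$ with $\tilde f\circ L=L'\circ\tilde f$ and the injectivity of $L\mapsto L'$ follow from the multiplicative structure, so I would run the entire rigidity argument there rather than mixing methods.
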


  From the discussion in \cite[Proposition 2.3]{IMRN2013},
  we have the following more refined description of $\varphi_n$-periodic
  subvarieties:  	
	\begin{proposition}\label{prop:refined MS}
	  \begin{itemize}
	  	\item [(a)] Let $V$ be an irreducible $\varphi_n$-periodic subvariety of $(\bP^1)^n$
	  	of dimension $n-r$. Then there exists a partition of $I_n-J_V$
	  	into $n-r$ non-empty subsets $J_1,\ldots,J_{n-r}$ such that
	  	the following hold.
	  	We fix an order on each $J_1,\ldots,J_{n-r}$, and identify:
	  		$$(\bP^1)^n=(\bP^1)^{J_V}\times(\bP^1)^{J_1}\times\ldots\times(\bP^1)^{J_{n-r}}.$$
	    For $1\leq i\leq n-r$, there exists an irreducible $\varphi_{|J_i|}$-periodic
	    curve $C_i$ in $(\bP^1)^{J_i}$
	    such that:
	    $$V=a_V\times C_1\times\ldots\times C_{n-r}.$$
	    
	    \item [(b)] Let $C$ be an irreducible $\varphi_n$-periodic curve
	    in $(\bP^1)^n$ and denote $m:=|I_n-J_C|\geq 1$. Then there exist
	    a permutation $(i_1,\ldots,i_m)$
	    of $I_n-J_C$ and non-constant polynomials 
	    $g_{i_2},\ldots,g_{i_m}\in \bQb[x]$
	    such that $C$ is given by the equations
	    $x_{i_2}=g_{i_2}(x_{i_1})$,\ldots,
	    $x_{i_m}=g_{i_m}(x_{i_{m-1}})$. Furthermore,
	    the polynomials $g_{i_2},\ldots,g_{i_m}$
	    commute with an iterate of $f$.
	  \end{itemize}	
	\end{proposition}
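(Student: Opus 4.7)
The plan is to deduce both parts from Theorem~\ref{thm:MS theorem} together with Proposition~\ref{prop:all_g}, using a graph-theoretic decomposition of the MS equations in part (a) and a function-field minimum-degree argument in part (b).

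For part (a), Theorem~\ref{thm:MS theorem} cuts $V$ out by type (a) equations $x_i=\zeta$ and type (b) equations $x_j=g(x_i)$. The type (a) equations assemble into the point $a_V\in(\bP^1)^{J_V}$, and no type (b) equation can straddle $J_V$ and its complement: either side would force the other coordinate to be constant or restrict $V$ to a finite set, contradicting irreducibility. Consequently the remaining equations involve only indices in $I_n-J_V$. I form the undirected graph $G$ on $I_n-J_V$ with an edge for each such equation and let $J_1,\ldots,J_s$ be its connected components. Since equations in distinct components involve disjoint variables, $V$ splits as
\[
V = a_V \times V_1 \times \cdots \times V_s
\]
with respect to $(\bP^1)^n=(\bP^1)^{J_V}\times(\bP^1)^{J_1}\times\cdots\times(\bP^1)^{J_s}$. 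Each $V_k$ is irreducible because $V$ is; a spanning tree of $G$ restricted to $J_k$ shows $\dim V_k\leq 1$, and the non-constancy of every projection out of $J_k$ gives $\dim V_k\geq 1$. So each $V_k=:C_k$ is an irreducible curve; the identity $\sum\dim C_k=\dim V=n-r$ forces $s=n-r$, and the coordinate-wise action of $\varphi_n$ immediately shows each $C_k$ is $\varphi^{J_k}$-periodic.

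For part (b), apply (a) with $\dim C=1$ to write $C=a_C\times C'$, where $C'\subseteq(\bP^1)^{I_n-J_C}$ is an irreducible periodic curve with every projection non-constant and $m=|I_n-J_C|$. Set $K=\bQb(C')$ and $d_i=[K:\bQb(x_i)]$, and pick $i_1\in I_n-J_C$ minimizing $d_{i_1}$. The key claim is $d_{i_1}=1$: for any other $j$, the projection of $C'$ to the $(x_{i_1},x_j)$-plane is an irreducible $\varphi_2$-periodic curve with non-constant projections, so by Theorem~\ref{thm:MS theorem} in $(\bP^1)^2$ it is the graph of a polynomial. If the graph has the form $x_j=g(x_{i_1})$ then $x_j\in\bQb(x_{i_1})$; if it has the form $x_{i_1}=h(x_j)$ then $\bQb(x_{i_1})\subseteq\bQb(x_j)$, and minimality of $d_{i_1}$ forces $\deg h=1$, still giving $x_j\in\bQb(x_{i_1})$ (with $h^{-1}\in M(f^\infty)$ by Proposition~\ref{prop:all_g}(b)). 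Hence $K=\bQb(x_{i_1})$ and every $x_j=\psi_j(x_{i_1})$ for some polynomial $\psi_j$ commuting with an iterate of $f$. Order the remaining indices so that $\deg\psi_{i_2}\leq\cdots\leq\deg\psi_{i_m}$ and rerun the projection-plus-MS argument on each consecutive pair $(i_k,i_{k+1})$: this yields the required $x_{i_{k+1}}=g_{i_{k+1}}(x_{i_k})$, with the reverse possibility $x_{i_k}=h(x_{i_{k+1}})$ ruled out by the ordering unless $\deg h=1$, whereupon $g_{i_{k+1}}=h^{-1}$ again lies in $M(f^\infty)$. The chain parameterizes an irreducible rational curve in $(\bP^1)^m$ of projection-degree $1$ to $x_{i_1}$ that contains the irreducible $C'$ of the same dimension, so the two coincide.

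The main obstacle is the key claim $d_{i_1}=1$: without it one cannot parameterize $C'$ by a single coordinate and the entire chain construction collapses. Its proof is precisely where the disintegrated hypothesis enters, via Theorem~\ref{thm:MS theorem} applied to each pairwise projection $(x_{i_1},x_j)$ together with the minimality choice of $i_1$. Once it is in place, the assembly of the chain and the verification that it cuts out $C'$ reduce to degree bookkeeping and the irreducibility of $C'$; the periodicity claims for part (a) propagate automatically from the coordinate-wise structure of $\varphi_n$.
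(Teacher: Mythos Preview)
Your argument is correct. Note that the paper itself gives no proof of this proposition: it simply records that the statement follows ``from the discussion in \cite[Proposition~2.3]{IMRN2013}''. The route you take---extracting the product decomposition in (a) from the connected components of the graph whose edges are the type (b) equations, and in (b) selecting the coordinate of minimal function-field degree as the root of the chain and then ordering the remaining indices by $\deg\psi_j$---is the natural one and matches what is carried out in the cited reference.

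Two small points are worth making explicit. In (a), the bound $\dim V_k\le 1$ from a spanning tree is justified by observing that once a root coordinate is fixed, every other coordinate on $V_k$ takes only finitely many values (each tree edge either determines one endpoint from the other or constrains it to a finite preimage), so the projection of $V_k$ to the root has finite fibers. In (b), when you apply Theorem~\ref{thm:MS theorem} to the pairwise projection $\pi(C')\subseteq(\bP^1)^2$, you are using that this image is again an irreducible periodic curve; this holds because the coordinate projection $(\bP^1)^m\to(\bP^1)^2$ is proper and intertwines $\varphi_m$ with $\varphi_2$.
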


	\begin{remark}\label{rem:get order}
		The permutation $(i_1,\ldots,i_m)$ mentioned in part (b) of Proposition
		\ref{prop:refined MS}
		induces the order $i_1\prec \ldots\prec i_m$ on $I_n-J_C$. Such
		a permutation and its induced order are not uniquely determined by $V$.
		For example, let $L$ be a \emph{linear} polynomial commuting with an iterate of 
		$f$.
		Let $C$ be the periodic curve in $(\bP^1)^2$ defined by the equation 
		$x_2=L(x_1)$. Then $I-J_C=\{1,2\}$, and $1\prec 2$ is an order satisfying the conclusion of 
		part (b).
		However, we can also express $C$ as $x_1=L^{-1}(x_2)$. Then the order $2\prec 1$ 
		also satisfies part (b). Therefore, in part (a), the choice of an
		order on each $J_i$ is not unique. Nevertheless, the partition of $I_n-J_V$
		into the subsets $J_1,\ldots,J_r$ is unique (see \cite[Section 2]{IMRN2013}).
	\end{remark}

	\begin{definition}\label{def:signature}
Let $V$ be an irreducible $\varphi_n$-periodic subvariety of dimension $r$. Partition
		$I_n-J_V$ into $J_1,\ldots,J_r$, and let $C_1,\ldots,C_r$
		be the periodic curves as in the conclusion of part (a) of Proposition \ref{prop:refined MS}.
		For $1\leq i\leq r$, we equip
		$J_i$ with an order satisfying the conclusion of part (b) of Proposition \ref{prop:refined MS}
		for the curve $C_i$
		and Remark \ref{rem:get order}. Then the collection consisting of $J_V$ and the 
		ordered
		sets $J_1,\ldots,J_r$ is called a signature of $V$.
	\end{definition}

	\begin{remark}
\label{fixed signature}
		Using the fact that there are finitely many signatures for periodic varieties, in Theorem~\ref{thm:informal} it suffices to show that for any given signature $\scrS$, the intersection of $X^{\oa}_{\varphi_n}$ with the union of all periodic subvarieties having signature $\scrS$ and codimension $r$ has bounded height.  
	\end{remark}

  We finish this section by giving a couple of examples to show
	why it is necessary to remove the anomalous locus
	in Theorem~\ref{thm:informal}.
	\begin{example}
\label{example 1}
Let $X\subseteq (\bP^1)^5$ be a $3$-fold with the property that its intersection with the periodic $3$-fold $V$ given by the equations $x_2=f(x_1)$ and $x_3=f(x_2)$ contains a surface $S$.  We claim that $S$ should be removed from $X$ in order for the points in the intersection with $\Per^{[3]}$ have bounded height. Indeed, for each positive integer $m$, we let $V_m$ be the periodic surface given by the equations
$$x_2=f(x_1)\text{, }x_3=f(x_2)\text{ and }x_4=f^m(x_3).$$
Then $V_m\subseteq V$ and so, $H_m\cap S\subseteq V_m\cap X$ where $H_m\subseteq (\bP^1)^5$ is the hypersurface given by $x_4=f^m(x_3)$. In particular, there is a curve $C_m\subseteq H_m\cap S\subseteq V_m\cap X$, and obviously this curve contains points of arbitrarily large height. It is easy to see that each curve $C_m$ is different as we vary $m$, and moreover, their union is Zariski dense in $S$. 
\end{example}

\begin{example}
\label{example 2}
Let $X\subseteq (\bP^1)^4$ be a surface with the property that its intersection with the surface $(a_1,a_2)\times (\bP^1)^2$ (for $a_1,a_2\in \bP^1(\Qbar)$) contains a curve $C$. Assume $a_1$ is not preperiodic,  $a_2=f(a_1)$, and also that $C$ projects  onto the third  coordinate of $(\bP^1)^4$. We show that the curve $C$ must be removed from $X$ in order for the points in the intersection with $\Per^{[2]}$ have bounded height. Indeed, for each positive integer $m$, we let $V_m$ be the periodic surface given by the equations: $x_2=f(x_1)$ and $x_3=f^m(x_2)$. Because $C$ projects onto the third coordinate of $(\bP^1)^4$, there exists $a_4\in\bP^1(\Qbar)$ such that $\left(a_1,f(a_1),f^{m+1}(a_1),a_4\right)\in C\cap V_m\subseteq X\cap V_m$, whose height grows to infinity as $m\to\infty$ (because $a_1$ is not preperiodic).

As an aside, we note that even though the above anomalous curves  need to be removed, it is not clear whether one would also have to remove the curves which appear in the intersection of  a surface $X\subseteq (\bP^1)^4$ with $(a_1,a_2)\times (\bP^1)^2$ if $a_1$ and $a_2$ are in different orbits under $f$. This phenomenon also occurs in the diophantine situation
(see the discussion in \cite[Section~5, pp.~24-25]{BMZ07}).
\end{example}

	\section{Properties of the height}\label{sec:height}

	Recall that $h$ denotes the absolute logarithmic Weil
	height on $\bP^1(\Qbar)$ (see \cite[Part B]{HS} or \cite[Chapter 1]{BG}). The following result is well-known:
	\begin{lemma}\label{lem:easyheight}
		For every $a,b\in \bQb$, we have:
		\begin{itemize}
			\item [(a)] $h(ab)\leq h(a)+h(b)$.
			\item [(b)] $h(a)-h(b)\leq h(a/b)$ if $b\neq 0$.
			\item [(c)] $h(a+b)\leq h(a)+h(b)+\log2$.
			\item [(d)] $h(a)-h(b)-\log2\leq h(a-b)$.
\item[(e)] $h(a^d)=|d|\cdot h(a)$ for any integer $d$ (where $a\ne 0$ if $d<0$).
		\end{itemize}
	\end{lemma}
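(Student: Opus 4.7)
The lemma is a standard consequence of the definition of the absolute logarithmic Weil height via places of a number field, together with the fact that $h$ is invariant under base change. My plan is to work over any number field $K$ containing $a$ and $b$, set $M_K$ for the (suitably normalized) set of places of $K$, and recall the identity
\[
h(\alpha)=\sum_{v\in M_K}\frac{[K_v:\Q_v]}{[K:\Q]}\log\max\bigl(|\alpha|_v,1\bigr)
\]
for $\alpha\in K$. Each of (a)--(e) will then follow by a term-by-term place estimate combined with the usual local inequalities, after which one sums over $v$ and uses $\sum_v [K_v:\Q_v]/[K:\Q]=1$ at the archimedean places to handle the $\log 2$ factors.

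For (a), the basic local inequality $\max(|ab|_v,1)\le \max(|a|_v,1)\cdot \max(|b|_v,1)$ holds at every $v$; taking logs and summing gives the claim. For (c), the same estimate holds at non-archimedean $v$ by the ultrametric inequality, while at an archimedean place $|a+b|_v\le 2\max(|a|_v,|b|_v)$ yields $\max(|a+b|_v,1)\le 2\max(|a|_v,1)\max(|b|_v,1)$; summing, and noting that the archimedean local degrees contribute total weight $1$, produces the additive $\log 2$. Part (e) for $d\ge 0$ follows from iterating (a) in one direction and from the local equality $\max(|a^d|_v,1)=\max(|a|_v,1)^d$ for the reverse; for negative $d$ one uses the product formula $\prod_v |a|_v^{[K_v:\Q_v]}=1$ (valid since $a\ne 0$) to see that $h(a^{-1})=h(a)$, and then the positive case gives $h(a^d)=|d|h(a)$.

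For (b), I would write $a=(a/b)\cdot b$ and apply (a) to obtain $h(a)\le h(a/b)+h(b)$, which rearranges to the desired inequality (here we need $b\ne 0$ so $a/b$ makes sense). Analogously for (d), I would write $a=(a-b)+b$ and apply (c) to get $h(a)\le h(a-b)+h(b)+\log 2$, i.e.\ $h(a)-h(b)-\log 2\le h(a-b)$.

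No serious obstacle is expected: the entire argument is a routine unpacking of the definition plus the triangle inequality at archimedean places and the ultrametric inequality at non-archimedean places, together with the product formula for (e) in the negative-exponent case. The only mild care-point is keeping track of the weights $[K_v:\Q_v]/[K:\Q]$ so that the $\log 2$ in (c) and (d) appears with coefficient exactly $1$ rather than something larger; this is handled by the standard bound $\sum_{v\mid \infty}[K_v:\Q_v]=[K:\Q]$.
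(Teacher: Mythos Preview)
Your argument is correct and is the standard proof via local place-by-place estimates; the paper itself gives no proof of this lemma, merely stating that ``the following result is well-known,'' so your write-up supplies exactly what the paper omits.
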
 
	

	We can use Lemma~\ref{lem:easyheight} to prove the following result (we note that part (b) of the following Lemma is instrumental in our proof of Theorem~\ref{thm:informal}).
	\begin{lemma}\label{lem:important}
		Let $n\geq 1$ and  $F(X_1,\ldots,X_n)\in \bQb[X_1,\ldots,X_n]$ having degree $D\geq 1$ in $X_n$.
		Write: 	$$F(X_1,\ldots,X_n)=F_D(X_1,\ldots,X_{n-1})X_n^D+\ldots+F_0(X_1,\ldots,X_{n-1}).$$
For each $i=1,\dots, n$, let $D_i$ be the degree in $X_i$ of $F$ (so $D_n=D$).   
		The following hold.
		\begin{itemize}
			\item [(a)]  Then there  exists a positive constant $C_1$ depending only
			on $F(X_1,\ldots,X_n)$ such that for every
			$a_1,\ldots,a_n\in \bQb$, we have
			  $$h(F(a_1,\ldots,a_n))\leq \sum_{i=1}^n D_ih(a_i)+C_1.$$
			  
			\item [(b)] There exists a positive constant $C_2$ depending
			only on $F(X_1,\ldots,X_n)$ such that
			for every $a_1,\ldots,a_n \in\bQb$ satisfying $F_i(a_1,\ldots,a_{n-1})\neq 0$
			for some $1\leq i\leq D$,
			we have:
			$$h(a_n)-2D_1h(a_1)-\ldots-2D_{n-1}h(a_{n-1})-C_2\leq h(F(a_1,\ldots,a_n)).$$ 
		\end{itemize}
	\end{lemma}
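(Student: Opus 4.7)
For part (a), the plan is a routine expansion. Writing
$F(X_1,\ldots,X_n)=\sum_{\vec j}\alpha_{\vec j}X_1^{j_1}\cdots X_n^{j_n}$
with each $j_i\le D_i$, I would apply Lemma~\ref{lem:easyheight}(c) to the sum (picking up an additive $\log N$, where $N$ is the number of nonzero monomials), then (a) to each product $\alpha_{\vec j}a_1^{j_1}\cdots a_n^{j_n}$, and finally (e) to each power $a_i^{j_i}$. Since $F$ is fixed, the heights of the coefficients $\alpha_{\vec j}$ and the quantity $\log N$ are absorbed into a constant $C_1=C_1(F)$, leaving $h(F(a_1,\ldots,a_n))\le\sum_{i=1}^n D_ih(a_i)+C_1$.

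For part (b), the key idea is to rearrange the defining equation and apply the product formula to the leading coefficient. Set $b:=F(a_1,\ldots,a_n)$ and $c_i:=F_i(a_1,\ldots,a_{n-1})$, and let $k$ be the largest index in $\{1,\ldots,D\}$ with $c_k\ne 0$; such $k$ exists by hypothesis. From
\[
c_ka_n^k \;=\; b\;-\;\sum_{i=0}^{k-1}c_ia_n^i,
\]
I would estimate at every place $v$ of a number field $K$ containing all of $a_1,\ldots,a_n$. The (ultra)triangle inequality gives
\[
|c_k|_v\,|a_n|_v^k \;\le\; \epsilon_v\cdot\max\!\bigl(|b|_v,\ \max_{0\le i<k}|c_i|_v\,|a_n|_v^i\bigr),
\]
with $\epsilon_v=1$ at non-archimedean places and $\epsilon_v\le D+1$ at archimedean places. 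A short case split on whether $|a_n|_v\ge 1$ (divide through by $|a_n|_v^{k-1}$) or $|a_n|_v<1$ (trivial) yields the uniform inequality
\[
|c_k|_v\,\max(1,|a_n|_v) \;\le\; \epsilon_v\cdot\max\!\bigl(|b|_v,\ \max_{0\le i\le k}|c_i|_v\bigr).
\]

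Taking logarithms, weighting by $d_v/[K:\Q]$ and summing over all places, the $c_k$-term disappears by the product formula (we used $c_k\ne0$), the $\epsilon_v$-term contributes only an absolute constant, and the left side becomes $h(a_n)$. For the right side, I would bound
$\log\max(1,|b|_v,|c_0|_v,\ldots,|c_k|_v)\le \log\max(1,|b|_v)+\log\max(1,|c_0|_v,\ldots,|c_k|_v)$
and then estimate the Gauss-type norm $\max(1,\max_i|c_i|_v)$ directly from the expansion of $F$: writing
$\max_i|c_i|_v\le N_v\,\|F\|_v\prod_{j<n}\max(1,|a_j|_v)^{D_j}$
(with $N_v=1$ at finite places) and summing gives
$\sum_{j<n}D_jh(a_j)+O_F(1)$.
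Combining everything produces $h(a_n)\le h(F(a_1,\ldots,a_n))+\sum_{j<n}D_jh(a_j)+C_2$, which is a fortiori the claim with coefficients $2D_j$.

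The only real obstacle is bookkeeping in the second part: one must pick the largest nonvanishing index $k\ge1$ (so that the product formula applies to a \emph{nonzero} leading coefficient), handle the case split $|a_n|_v\gtrless1$ cleanly to get a single inequality, and aggregate the heights of the several coefficients $c_i$ as the projective height of the full coefficient vector of $F(a_1,\ldots,a_{n-1},X_n)$ — otherwise, estimating each $h(c_i)$ separately by part~(a) and adding would introduce an unnecessary factor of $D+1$ in front of the $D_jh(a_j)$, whereas using the collective bound as above keeps the coefficient at $D_j$ (hence well within the claimed $2D_j$).
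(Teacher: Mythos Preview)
Your proposal is correct, and part~(a) is essentially the paper's argument. For part~(b), however, you take a genuinely different route from the paper. The paper works entirely with the global height inequalities of Lemma~\ref{lem:easyheight}: it picks the maximal index $i$ with $F_i(a_1,\ldots,a_{n-1})\neq 0$, writes $F(a_1,\ldots,a_n)=F_i(a_1,\ldots,a_{n-1})a_n^i+Q_i(a_1,\ldots,a_n)$, and then applies Lemma~\ref{lem:easyheight}(d),(b) together with part~(a) of the present lemma to bound $h(F_i(a_1,\ldots,a_{n-1}))$ and $h(Q_i(a_1,\ldots,a_n))$ separately; each of these contributes a term $\sum_{j<n}D_jh(a_j)$, which is why the factor $2D_j$ appears. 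Your place-by-place argument, by contrast, applies the product formula to kill the nonzero leading coefficient $c_k$ and then bounds the collective Gauss norm of the whole coefficient vector $(c_0,\ldots,c_k)$ at once, so you only pick up $\sum_{j<n}D_jh(a_j)$ a single time. Thus your method is slightly sharper (coefficient $D_j$ rather than $2D_j$, as you note), at the cost of unpacking the height into local contributions; the paper's method is shorter and stays at the level of the basic height lemmas, which suffices since only some linear bound in the $h(a_j)$ is needed downstream.
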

	\begin{proof}
\begin{enumerate}
\item[(a)] This is a standard result which follows using an analysis at each place (both archimedean and nonarchimedean) similar to the proof of  Lemma~\ref{lem:easyheight}.
\item[(b)] Given $a_1,\dots, a_n\in\Qbar$, we let $i$ be maximum with the property that $F_i(a_1,\dots, a_{n-1})\ne 0$. Then, letting 
$$Q_i(x_1,\dots, x_n):=F_{i-1}X_n^{i-1}+\ldots+F_0$$
so that $F(a_1,\ldots,a_n)=F_i(a_1,\ldots,a_{n-1})a_n^i+Q_i(a_1,\ldots,a_n)$ we obtain the following inequalities from part (a) and Lemma~\ref{lem:easyheight}: 
\begin{align*}
  	h(F(a_1,\ldots,a_n))&\geq h(F_i(a_1,\ldots,a_{n-1})a_n^i)-h(Q_i(a_1,\ldots,a_n))-\log 2\\
	  \begin{split}
	   	  &\geq ih(a_n)-h(F_i(a_1,\ldots,a_{n-1}))-2\log2\\
	   	  &\quad -(i-1)h(a_n)-\sum_{j=1}^{n-1}D_jh(a_j) - C_3
	  \end{split}\\
	  			&\ge h(a_n)-\sum_{j=1}^{n-1}2D_jh(a_j) - C_2
	  \end{align*}
where $C_3$ is the constant appearing when applying part (a) to the polynomial $Q_i$
and $C_2$ is another constant (depending on $C_3$ and the constant appearing when applying part (a) to the polynomial $F_i$). Clearly, all these constants depend only on the coefficients of $F$ and are effectively computable.
\end{enumerate}\end{proof}


	For every polynomial $f(x)\in\bQb[x]$ of degree $d\geq 2$, the canonical height $\hhat_f$ can
	be defined using a well-known trick of Tate (see \cite[Chapter 3]{Sil-ArithDS}):
	$$\hhat_f(a):=\lim_{m\to\infty}\frac{h(f^m(a))}{d^m}\text{, for all } a\in \bQb.$$

	We have the following properties: 
	\begin{lemma}\label{lem:basic cano}
		Let $f(x)\in\bQb[x]$ of degree $d\geq 2$. We have:
		\begin{itemize}
			\item [(a)] There is a constant $C_4$ depending only on $f$ such that $|\hhat_f(a)-h(a)|\leq C_4$ for every $a\in\bQb$.
			\item [(b)] $\hhat_f(f(a))=d\hhat_f(a)$ for every $a\in\bQb$.
			\item [(c)] $a$ is $f$-preperiodic if and only if $\hhat_f(a)=0$.
			\item [(d)] If $f$ is disintegrated and $g(x)\in \bQb[x]$ commutes with an iterate of $f(x)$ then $\hhat_f(g(a))=\deg(g)\hhat_f(a)$ for every $a\in\Qbar$.
		\end{itemize}
	\end{lemma}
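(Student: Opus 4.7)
Parts (a), (b), (c) are the standard Tate-limit package, and my plan is to derive them from a single key inequality: the functional estimate
\begin{equation*}
|h(f(a)) - d \cdot h(a)| \leq C_0 \quad\text{for every } a \in \bQb,
\end{equation*}
with $C_0 = C_0(f)$; this is a standard consequence of the functoriality of the Weil height on $\bP^1$ (with the upper direction being an instance of Lemma~\ref{lem:important}(a)). The Tate telescoping bound
\begin{equation*}
\left| \frac{h(f^{m+1}(a))}{d^{m+1}} - \frac{h(f^m(a))}{d^m} \right| \leq \frac{C_0}{d^{m+1}}
\end{equation*}
shows that $\{h(f^m(a))/d^m\}$ is Cauchy, and summing the resulting geometric series yields (a) with $C_4 = C_0/(d-1)$. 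Part (b) follows by reindexing the defining limit. For (c), preperiodicity forces a finite orbit, hence bounded height, hence $\hhat_f(a) = 0$; conversely, if $\hhat_f(a) = 0$ then (a) and (b) force $h(f^m(a)) \leq C_4$ for every $m$, so Northcott's theorem applied in the fixed number field $\Q(a)$ forces the orbit to be finite.

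Part (d) is where I expect the only genuine subtlety, and this is the main obstacle. By hypothesis $g$ commutes with some fixed iterate $f^k$. Combining (a) with the one-variable estimate $h(g(b)) = \deg(g)\,h(b) + O(1)$ gives a constant $c = c(f,g)$ with
\begin{equation*}
\bigl|\hhat_f(g(b)) - \deg(g)\cdot \hhat_f(b)\bigr| \leq c \quad\text{for every } b \in \bQb.
\end{equation*}
The trick will be to apply this with $b = f^{km}(a)$ for arbitrary $m \geq 1$: the commutativity $g \circ f^{km} = f^{km} \circ g$ (inherited from $g \circ f^k = f^k \circ g$) combined with (b) pulls a factor of $d^{km}$ out of each term inside the absolute value, leaving
\begin{equation*}
\bigl|\hhat_f(g(a)) - \deg(g)\cdot \hhat_f(a)\bigr| \leq \frac{c}{d^{km}}.
\end{equation*}
Letting $m \to \infty$ upgrades this $O(1)$-inequality to the desired equality. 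I would note in passing that the disintegratedness hypothesis on $f$ plays no role in the argument: what is really used is only that $g$ commutes with \emph{some} iterate of $f$, which is hardwired into the statement.
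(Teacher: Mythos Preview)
Your arguments for (a)--(c) are the standard Tate construction, which is exactly what the paper defers to by citing \cite[Chapter~3]{Sil-ArithDS}; one tiny slip is that the orbit $\{f^m(a)\}$ lives in the number field generated by $a$ \emph{and the coefficients of $f$}, not just $\Q(a)$, but this changes nothing.

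For (d) your route is genuinely different from the paper's. The paper invokes Proposition~\ref{prop:all_g}(a) to conclude that $f$ and $g$ share a common iterate, whence $\hhat_f=\hhat_g$, and then part (b) applied to $g$ gives $\hhat_f(g(a))=\hhat_g(g(a))=\deg(g)\,\hhat_g(a)=\deg(g)\,\hhat_f(a)$. Your argument instead bootstraps the $O(1)$-estimate $\bigl|\hhat_f(g(b))-\deg(g)\,\hhat_f(b)\bigr|\le c$ to an exact identity by evaluating along the orbit $b=f^{km}(a)$ and dividing out the factor $d^{km}$. This is more elementary: it bypasses Proposition~\ref{prop:all_g} (hence Ritt's theory of commuting polynomials) entirely, and, as you correctly observe, it works without the disintegratedness hypothesis on $f$. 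The paper's route, in exchange, is a one-liner once Proposition~\ref{prop:all_g} is available and makes visible the structural reason (common iterates) behind the identity. Note also that the paper's argument, read literally, only treats $\deg(g)\ge 2$ (common iterates and $\hhat_g$ make no sense for linear $g$), whereas yours handles all degrees uniformly.
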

	\begin{proof}
		For parts (a), (b) and (c): see \cite[Chapter 3]{Sil-ArithDS}. For
		part (d), Proposition~\ref{prop:all_g} 
		gives that $f$ and $g$ have a common iterate. Hence $\hhat_{f}=\hhat_g$ and the desired result follows from part (b). 
	\end{proof}

	The following inequality will be the key ingredient for the proof of
	Theorem~\ref{thm:informal}. It is essentially part (b) of Lemma~\ref{lem:important}
	where we replace $h$ by $\hhat_f$:
	\begin{cor}\label{cor:key inequality}
	Let $n\geq 1$ and  $F(X_1,\ldots,X_n)\in \bQb[X_1,\ldots,X_n]$ having degree $D\geq 1$ in $X_n$. Let $f(x)\in\Qbar[x]$ having degree at least 2.
		Write:
$$F(X_1,\ldots,X_n)=F_D(X_1,\ldots,X_{n-1})X_n^D+\ldots+F_0(X_1,\ldots,X_{n-1}).$$
	For $1\leq i\leq n-1$, let $D_i$ be the degree of $F$ in $X_i$. There
	exists a positive constant $C_5$ depending
			only on $F$ and $f$ such that
			for every $a_1,\ldots,a_n \in\bQb$ satisfying 
			$F(a_1,\ldots,a_n)=0$ and $F_i(a_1,\ldots,a_{n-1})\neq 0$
			for some $1\leq i\leq D$,
			we have:
			$$\hhat_f(a_n)\leq D_1\hhat_f(a_1)+\ldots+D_{n-1}\hhat_f(a_{n-1})+C_5.$$
	\end{cor}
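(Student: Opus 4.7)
The plan is to apply Lemma~\ref{lem:important}(b) to the identity $F(a_1,\ldots,a_n)=0$ and then transfer the resulting Weil-height inequality to canonical heights via the uniform comparison $|\hhat_f-h|\le C_4$ supplied by Lemma~\ref{lem:basic cano}(a), exactly as the paper advertises when introducing the corollary.

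In detail, the vanishing $F(a_1,\ldots,a_n)=0$ gives $h(F(a_1,\ldots,a_n))=0$, while the hypothesis that $F_i(a_1,\ldots,a_{n-1})\neq 0$ for some $1\le i\le D$ is precisely the non-degeneracy condition required by Lemma~\ref{lem:important}(b). Applying that lemma yields an inequality of the shape
\[
h(a_n)\le D_1 h(a_1)+\ldots+D_{n-1}h(a_{n-1})+C_2,
\]
with $C_2$ depending only on $F$. (To reach the clean coefficient $D_\ell$ rather than the naive $2D_\ell$ produced by the proof of Lemma~\ref{lem:important}(b) as written, one replaces the step that bounds $h(F_i(a_1,\ldots,a_{n-1}))$ separately via Lemma~\ref{lem:important}(a) with a projective-height estimate on the coefficient vector $(F_0(a_1,\ldots,a_{n-1}):\ldots:F_D(a_1,\ldots,a_{n-1}))$ coming from the functoriality of the Weil height under the morphism $(\bP^1)^{n-1}\to\bP^D$ defined by the $F_j$'s, combined with the classical Gelfond bound on the height of a root of a univariate polynomial in terms of the projective height of its coefficients.)

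Lemma~\ref{lem:basic cano}(a) now provides $C_4>0$, depending only on $f$, with $|\hhat_f(a)-h(a)|\le C_4$ for every $a\in\Qbar$. Substituting $\hhat_f(a_n)\le h(a_n)+C_4$ on the left and $h(a_\ell)\le\hhat_f(a_\ell)+C_4$ on the right of the Weil-height inequality gives the stated canonical-height inequality, with $C_5:=C_2+C_4+C_4(D_1+\ldots+D_{n-1})$ absorbing all the constants. Every ingredient is effectively computable from $F$ and $f$ alone, which is consistent with Remark~\ref{effective remark}. The whole argument is essentially formal once the Weil-height bound is in place; the only mildly delicate point is the bookkeeping needed to hit the clean coefficient $D_\ell$, and no genuine analytic obstacle is encountered.
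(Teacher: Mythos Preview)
Your proposal is correct and follows exactly the paper's approach: the paper's proof is the single sentence ``This follows from part (b) of Lemma~\ref{lem:important} and part (a) of Lemma~\ref{lem:basic cano},'' which is precisely your two-step argument of applying the Weil-height lower bound to $h(F(a_1,\ldots,a_n))=0$ and then switching to canonical heights via the uniform comparison $|\hhat_f-h|\le C_4$.

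One small remark: the paper's proof, read literally, only produces the coefficient $2D_i$ coming from Lemma~\ref{lem:important}(b) as stated, not the $D_i$ appearing in the corollary. You noticed this and supplied a genuine refinement (the projective-height bound on the coefficient vector $[F_0(a):\cdots:F_D(a)]$ combined with the standard root-height inequality) that recovers the sharper constant. The paper does not spell this out, and indeed every subsequent application of the corollary (e.g.\ in the proof of Proposition~\ref{prop:bounded degree}) only uses the inequality in the form $\hhat_f(a_n)\le c\sum_i\hhat_f(a_i)+c'$ for \emph{some} constants depending on $F$ and $f$, so the distinction between $D_i$ and $2D_i$ is immaterial for the paper. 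Your parenthetical is thus a correct but optional sharpening rather than a necessary ingredient.
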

	\begin{proof}
	This follows from part (b) of Lemma~\ref{lem:important} and 
	part (a) of Lemma~\ref{lem:basic cano}.
	\end{proof}
	
	We finish this section with the following remark about Proposition~\ref{prop:all_g}:
	\begin{remark}\label{rem:effective g}
	Recall the group $M(f^\infty)$ and the choice of $\tilde{f}$ from Proposition~\ref{prop:all_g}. We explain that both $M(f^\infty)$ and $\tilde{f}$
	could be determined effectively. In fact, after conjugating with an effectively 
	determined linear polynomial, we may assume that $f$ has the normal form:
	$$f(x)=x^d+a_{d-r}x^{d-r}+\ldots$$
	with $a_{d-r}\neq 0$ and $r\geq 2$. Then it is an easy exercise to show that
	$M(f^\infty)$ is a subgroup of the group of linear
	polynomials of the form $\mu x$ where $\mu$ is an 
	$r^{\text{th}}$ root of unity.
	
	To determine $\tilde{f}$, note that part (d) of Proposition~\ref{prop:all_g}
	gives that $d$ is a power of $\tilde{d}:=\deg(\tilde{f})$ and there
	are at most $|M(f^\infty)|$ choices of $\tilde{f}$. Let $K$ be the field obtained
	by adjoining all coefficients of $f$ to $\Q$. Then let $\tilde{K}$
	be the field obtained by adjoining
	all the coefficients of $\tilde{f}$ to $K$. We must have 
	$[\tilde{K}:K]\leq |M(f^\infty)|$ since $\sigma(\tilde{f})$ is another choice
	for $\tilde{f}$ for every $K$-embedding $\sigma$ of $\tilde{K}$ into $\Qbar$. 
	Together
	with $\hhat_f(\tilde{f}(m))=\tilde{d} \cdot \hhat_f(m)$, we have that the degree and 
	(Weil) height of $\tilde{f}(m)$ are bounded effectively in terms of $f$ and $m$
	for every integer $m$. Hence $\tilde{f}$ is effectively determined. 
	\end{remark}

	
	\section{Basic properties of the anomalous locus}\label{sec:elementary geometry}

	Throughout this section, we fix a disintegrated polynomial 
	$f(x)\in\Qbar[x]$ of degree $d\geq 2$, fix a positive integer $n$ and let
	$\varphi_n$ be the corresponding self-map of $(\bP^1)^n$ as in 
	Assumption~\ref{assume:same maps}. Fix
	an irreducible subvariety $X$ of $(\bP^1)^n$ of dimension
	$r$ satisfying $1\leq r\leq n-1$. Recall Definition~\ref{def:dyn special}
	and Definition~\ref{def:dyn anomalous}
	of special and anomalous subvarieties with respect to $\varphi_n$ and $X$.  In 
	this section we prove several 
	geometric properties of
	the set $X^{\oa}_{\varphi_n}$ which will be used repeatedly in the proofs
	of Theorem~\ref{thm:informal} and Theorem~\ref{thm:open} in
	the next 2 sections. 
	By the affine part of $(\bP^1)^n$, we mean the Zariski
	open subset $(\bA^1)^n=\bA^n$. More generally the affine part
	of a subset of $(\bP^1)^n$ is its intersection with
	$\bA^n$. 
	
	\begin{lemma}\label{lem:1st easy}
	If $X$ is contained in a proper special subvariety $Z$ of
	$(\bP^1)^n$ then $X^{\oa}_{\varphi_n}=\emptyset$. If the affine part
	of $X$ is empty then $X^{\oa}_{\varphi_n}=\emptyset$.
	\end{lemma}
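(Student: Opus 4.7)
The plan is to reduce the second claim to the first, and to verify the first by taking $Y = X$ itself.

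For the first statement, suppose $X \subseteq Z$ where $Z$ is a proper $\varphi_n$-special subvariety of $(\bP^1)^n$. I will show that $X$ is itself $\varphi_n$-anomalous (as a subvariety of itself). Take $Y := X$. Then $Y \subseteq X \cap Z$ trivially, and since $Z$ is proper we have $\dim(Z) \leq n-1$, so
\[
\dim(X) + \dim(Z) - n \leq \dim(X) - 1 < \dim(X).
\]
Combined with the hypothesis $\dim(X) = r \geq 1 > 0$, this gives $\dim(Y) > \max\{0,\dim(X)+\dim(Z)-n\}$. Hence $X$ itself is $\varphi_n$-anomalous, so $X^{\oa}_{\varphi_n} = X \setminus X = \emptyset$.

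For the second statement, suppose the affine part of $X$ is empty, i.e.\ $X \subseteq (\bP^1)^n \setminus \bA^n = \bigcup_{i=1}^n \{x_i = \infty\}$. Since $X$ is irreducible, it must be contained in one of the hypersurfaces $H_i := \{x_i = \infty\}$. Now $\infty$ is a fixed point of $f$ (as $f$ is a polynomial of degree $\geq 2$), and so $H_i$ has constant projection equal to $\infty$ on the $i$-th coordinate, while its projection to the remaining $n-1$ factors is the whole space $(\bP^1)^{I_n \setminus \{i\}}$, which is trivially $\varphi_{n-1}$-periodic. In the notation of Definition~\ref{def:dyn special}, $H_i$ has $J_{H_i} = \{i\}$, $a_{H_i} = \infty$, and the factor $(\bP^1)^{I_n \setminus \{i\}}$ is $\varphi^{I_n \setminus \{i\}}$-periodic. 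Thus $H_i$ is a proper $\varphi_n$-special subvariety of $(\bP^1)^n$ containing $X$, and applying the first part yields $X^{\oa}_{\varphi_n} = \emptyset$.

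The only step needing any care is recognizing that the boundary hypersurface $\{x_i = \infty\}$ genuinely fits Definition~\ref{def:dyn special} as a special subvariety; this is immediate because the whole space is vacuously periodic and $\infty$ is $f$-fixed. Otherwise both parts are essentially formal consequences of the definitions.
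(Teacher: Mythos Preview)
Your proof is correct and follows exactly the paper's approach: the paper also observes that $X$ itself is anomalous for the first assertion, and that $X$ lies in a special hypersurface of the form $\infty\times(\bP^1)^{n-1}$ for the second. One small remark: Definition~\ref{def:dyn special} places no periodicity condition on the point $a_Z$, so your verification that $\infty$ is $f$-fixed is true but unnecessary---$\{x_i=\infty\}$ is special simply because $(\bP^1)^{I_n\setminus\{i\}}$ is trivially $\varphi_{n-1}$-periodic.
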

	\begin{proof}
	The first assertion is immediate since $X$ itself is anomalous. For the second assertion,
	note that if the affine part of $X$ is empty then $X$ is contained in
	a special subvariety of the form $\infty\times (\bP^1)^{n-1}$.
	\end{proof}

	For any $j\geq r$ factors $\bP^1$ of $(\bP^1)^n$, we have
	the projection $(\bP^1)^n\to(\bP^1)^{j}$. The next property allows
	us to 
	assume that the image of $X$ under this projection
	has dimension $r$.
	\begin{lemma}\label{lem:projection r+1}
		Pick $j\geq r$ distinct factors $\bP^1$ of $(\bP^1)^n$ and let $\pi$
		denote the corresponding projection $(\bP^1)^n\to(\bP^1)^{j}$. If
		$\dim(\pi(X))<r$ then $X^{\oa}_{\varphi_n}=\emptyset$. 
	\end{lemma}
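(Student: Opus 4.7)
The plan is to show, for every point $a \in X$, that $a$ lies on a positive-dimensional $\varphi_n$-anomalous subvariety of $X$, which forces $X^{\oa}_{\varphi_n} = \emptyset$. After permuting factors (which does not change the statement), we may assume $\pi : (\bP^1)^n \to (\bP^1)^j$ is the projection onto the first $j$ coordinates. Set $W := \overline{\pi(X)}$; this is an irreducible subvariety of $(\bP^1)^j$ because $X$ is irreducible, and $\dim W < r$ by hypothesis.

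For each $a \in X$, I would take $Z_a := \pi^{-1}(\pi(a)) = \{\pi(a)\} \times (\bP^1)^{n-j}$. This $Z_a$ is $\varphi_n$-special in the sense of Definition~\ref{def:dyn special}: its projection to the first $j$ coordinates is the single point $\pi(a) \in (\bP^1)^j(\Qbar)$, while the remaining factor $(\bP^1)^{n-j}$ is trivially $\varphi_{n-j}$-periodic (it is the entire ambient space). Note $\dim Z_a = n - j$.

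Next I would apply the theorem on dimensions of fibers to the dominant morphism $\pi|_X : X \to W$: every fiber has dimension at least $\dim X - \dim W = r - \dim W \ge 1$. Let $Y$ be an irreducible component of $X \cap Z_a = \pi|_X^{-1}(\pi(a))$ containing $a$, so $\dim Y \ge r - \dim W \ge 1$. On the other hand,
\[
\dim X + \dim Z_a - n = r + (n-j) - n = r - j \le 0,
\]
since $j \ge r$, so $\max\{0,\dim X + \dim Z_a - n\} = 0 < \dim Y$. Thus $Y$ satisfies the conditions of Definition~\ref{def:dyn anomalous} with $Z = Z_a$; it is a $\varphi_n$-anomalous subvariety of $X$ containing $a$. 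Since $a$ was arbitrary, $X^{\oa}_{\varphi_n}=\emptyset$.

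There is no real obstacle here: the only inputs are the fiber-dimension theorem and the immediate verification that a fiber of a coordinate projection is of the form $\zeta \times (\bP^1)^{n-j}$ and therefore $\varphi_n$-special. The role of the hypothesis $j \ge r$ is precisely to make the trivial bound $\max\{0,r-j\}=0$ weaker than the positive lower bound on $\dim Y$ forced by the drop in dimension under $\pi$.
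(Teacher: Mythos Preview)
Your proof is correct and follows essentially the same approach as the paper: both use the special subvariety $\pi(a)\times(\bP^1)^{n-j}$, invoke the fiber-dimension theorem to get a positive-dimensional component of $X\cap Z_a$ through $a$, and check the inequality $\dim Y>\max\{0,r+(n-j)-n\}=0$ to conclude that every point of $X$ lies in an anomalous subvariety. Your write-up is slightly more explicit about the dimension count, but the argument is the same.
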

	\begin{proof}
		Without loss of generality, we assume that the chosen factors are the first
		$j$ factors $\bP^1$ in $(\bP^1)^n$.
		Since $\dim(\pi(X))<r$, for any point $\alpha\in \pi(X(\Qbar))$
		every irreducible component of $\pi^{-1}(\alpha)\cap X$
		has dimension at least 1 (see Mumford's book \cite[pp.~48]{MumfordRed}). By
		intersecting $X$ with the special variety $\alpha\times (\bP^1)^{n-j}$,
		we conclude that every irreducible component of $\pi^{-1}(\alpha)\cap X$
		is an anomalous subvariety of $X$. Since this holds for
		every $\alpha\in \pi(X(\Qbar))$, we have $X^{\oa}_{\varphi_n}=\emptyset$.
	\end{proof}

	Using the last two lemmas, for the rest of this section we \emph{make the extra assumption}
	that the affine part of $X$ is non-empty and the image of $X$ under the projection to
	$(\bP^1)^{j}$ has dimension $r$ for every choice of $j\geq r$ factors $\bP^1$. Then the next statement is immediate:
	\begin{cor}\label{cor:F^J}
	Let $J$ be any ordered subset of $I_n:=\{1,\ldots,n\}$ of size $r+1$ explicitly
	listed in increasing order as $(i_1,\ldots, i_{r+1})$. As before, we define 
	$\bP^J:=(\bP^1)^{r+1}$ equipped with the projection $\pi^J$ from $(\bP^1)^n$.
	Then there exists an irreducible
	polynomial $F^J(X_{i_1},\ldots,X_{i_{r+1}})$ such that
	the affine part $\pi^J(X)\cap\bA^J$ of $\pi^J(X)$ is defined by the equation
	$F^J(x_{i_1},\ldots,x_{i_{r+1}})=0$. The polynomial $F^J$ is unique up to multiplication
	by an element in $\Qbar^{*}$.
	\end{cor}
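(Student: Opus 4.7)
The plan is straightforward since this corollary is essentially a restatement of the standard fact that an irreducible hypersurface in affine space is cut out by a single irreducible polynomial. The work reduces to verifying that $\pi^J(X)\cap \bA^J$ really is an irreducible hypersurface in $\bA^J$.

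First, I would note that $\pi^J\colon (\bP^1)^n\to (\bP^1)^J$ is a morphism of complete varieties (the projection kills $n-(r+1)$ of the factors $\bP^1$), so $\pi^J(X)$ is closed in $(\bP^1)^J=(\bP^1)^{r+1}$, and it is irreducible because it is the image of the irreducible variety $X$. The standing assumption for this section guarantees that $\dim \pi^J(X)=r$, so $\pi^J(X)$ is an irreducible hypersurface in $(\bP^1)^{r+1}$.

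Second, the affine part of $X$ is assumed non-empty, and its image under $\pi^J$ is contained in $\bA^J\cap \pi^J(X)$, so $\pi^J(X)\cap \bA^J$ is a non-empty open subset of the irreducible variety $\pi^J(X)$. Hence $\pi^J(X)\cap\bA^J$ is itself an irreducible closed subvariety of $\bA^J\cong \bA^{r+1}$ of dimension $r$, i.e.\ an irreducible hypersurface in $\bA^{r+1}$.

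Third, and finally, the polynomial ring $\Qbar[X_{i_1},\ldots,X_{i_{r+1}}]$ is a unique factorization domain, and it is a standard consequence of this fact that an irreducible hypersurface in $\bA^{r+1}$ is cut out by a single irreducible polynomial, unique up to multiplication by a nonzero element of $\Qbar^{*}$. Applying this to $\pi^J(X)\cap\bA^J$ produces the polynomial $F^J$ with the asserted properties. I do not expect any real obstacle: the content of the corollary is entirely book-keeping on top of the reductions already carried out in Lemma~\ref{lem:1st easy} and Lemma~\ref{lem:projection r+1}.
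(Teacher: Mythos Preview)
Your proposal is correct and matches the paper's approach: the paper does not give an explicit proof but simply declares the statement ``immediate'' from the standing assumptions made just before it (non-empty affine part and $\dim\pi^J(X)=r$), and your three paragraphs spell out exactly the routine verification the authors are suppressing.
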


	We now fix a choice of $F^J$ for each ordered subset $J$ of $I_n$ as in 
	Corollary~\ref{cor:F^J}. We have the following result which allows
	us to apply Corollary~\ref{cor:key inequality} later.

	\begin{proposition}\label{prop:can use key}
	Assume that  $X^{\oa}_{\varphi_n}\cap\bA^n(\Qbar)$ is non-empty and pick
	any $\alpha=(\alpha_1,\ldots,\alpha_n)\in X^{\oa}_{\varphi_n}\cap\bA^n(\Qbar)$. Let $J$ be any ordered set $(i_1,\ldots,i_{r+1})$ as in Corollary~\ref{cor:F^J} and let $1\leq \ell\leq r+1$. Let $D\geq 0$ denote
	the degree of $X_{i_{\ell}}$ in $F^J$. Write:
	$$F^J=F^J_DX_{i_{\ell}}^D+\ldots+F^J_1X_{i_{\ell}}+F^J_0$$
	where $F^J_k$ for $0\leq k\leq D$ is a polynomial in
	the variables $X_{i_1},\ldots,\widehat{X_{i_\ell}},\ldots,X_{i_{r+1}}$. 
	Then there exists $1\leq k\leq  D$
	such that
	 $F^J_k(\alpha_{i_1},\ldots,\widehat{\alpha_{i_\ell}},\ldots,\alpha_{i_{r+1}})\neq 0$. 
	In particular,
	this implies $D\geq 1$.
	\end{proposition}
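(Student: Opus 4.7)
The plan is to argue by contradiction: suppose that $F^J_k(\beta)=0$ for every $1\le k\le D$, where we set $\beta:=(\alpha_{i_1},\ldots,\widehat{\alpha_{i_\ell}},\ldots,\alpha_{i_{r+1}})$. Since $\alpha\in X$ forces $F^J(\pi^J(\alpha))=0$, the constant term $F^J_0(\beta)$ must vanish as well. Hence every coefficient of $F^J$ (viewed as a polynomial in $X_{i_\ell}$) vanishes at $\beta$, so $F^J$ vanishes identically on the line $L:=\{\beta\}\times \bA^1\subseteq \bA^J$; the crucial consequence is the inclusion $L\subseteq \pi^J(X)\cap \bA^J$.

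The assertion $D\ge 1$ deserves a brief separate justification using only the standing hypotheses on $X$: if $D=0$, then $F^J$ does not involve $X_{i_\ell}$ at all, so $\pi^J(X)\cap \bA^J$ is a cylinder of the form $\bA^1\times V$ for some hypersurface $V\subseteq \bA^r$; this would force $\pi^{J\setminus\{i_\ell\}}(X)\cap \bA^{J\setminus\{i_\ell\}}=V$ to have dimension $r-1$, contradicting the standing assumption that $\pi^{J\setminus\{i_\ell\}}(X)$ has dimension $r$.

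To extract a contradiction from $L\subseteq \pi^J(X)$, I would use the $\varphi_n$-special subvariety $Z:=(\pi^{J\setminus\{i_\ell\}})^{-1}(\beta)\subseteq (\bP^1)^n$, of dimension $n-r$ and containing $\alpha$. The key claim is that some irreducible component $Y$ of $X\cap Z$ through $\alpha$ has $\dim Y\ge 1$; such a $Y$ would be $\varphi_n$-anomalous, since $\dim Y\ge 1>\max\{0,\dim X+\dim Z-n\}=0$, contradicting $\alpha\in X^{\oa}_{\varphi_n}$. To establish the claim, consider the proper (hence surjective) generically finite morphism $f:=\pi^J|_X:X\to \pi^J(X)$ between irreducible $r$-dimensional varieties, and split on the local fiber dimension at $\alpha$. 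If $\dim_\alpha f^{-1}(f(\alpha))\ge 1$, then a positive-dimensional subvariety of $X$ through $\alpha$ already lies inside $(\pi^J)^{-1}(\pi^J(\alpha))\subseteq Z$, supplying the desired component. Otherwise $f$ is quasi-finite at $\alpha$, and Zariski's Main Theorem lets us factor $f$ locally near $\alpha$ as an open immersion followed by a finite morphism of irreducible $r$-dimensional varieties; the resulting integral extension of local domains preserves heights of primes, so the preimage of $L$ has pure local dimension $1$ at $\alpha$, and this preimage agrees with $X\cap Z$ in a neighborhood of $\alpha\in \bA^n$.

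I expect the main delicate point to be the quasi-finite case, where Zariski's Main Theorem and the behavior of heights under integral extensions of local domains are required; once this is granted, everything else flows cleanly from the inclusion $L\subseteq \pi^J(X)$ together with the definition of $\varphi_n$-special subvarieties.
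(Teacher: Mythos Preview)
Your approach mirrors the paper's: argue by contradiction, deduce that the line $L=\{\beta\}\times\bA^1$ lies in $\pi^J(X)\cap\bA^J$, and then exhibit a $\varphi_n$-anomalous component of $X\cap Z$ (with $Z=\{\beta\}\times(\bP^1)^{n-r}$) passing through $\alpha$. The paper does this in one stroke, invoking Mumford's fiber-dimension result for a positive-dimensional component of $(\pi^J|_X)^{-1}(\alpha'\times\bP^1)$ and asserting it contains $\alpha$; you instead split on the local fiber dimension of $f=\pi^J|_X$ at $\alpha$, and you supply a separate clean argument for $D\ge 1$ (which the paper only records as a by-product).

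Your case (b), however, has a gap. The claim that the integral extension of local domains coming from Zariski's Main Theorem ``preserves heights of primes'' requires going-down, hence normality of the base, and the hypersurface $\pi^J(X)\subseteq(\bP^1)^{r+1}$ need not be normal. Without going-down, a finite surjection $g:\tilde X\to Y$ between irreducible varieties of the same dimension can have $g^{-1}(L)$ acquire an isolated point even when $L\subseteq Y$ is a curve: take $Y$ to be the surface obtained from $\bA^2$ by gluing two points $p\ne q$, let $g$ be the gluing map, and let $L$ be the image of a curve through $p$ avoiding $q$; then $q$ is isolated in $g^{-1}(L)$. So the conclusion that ``the preimage of $L$ has pure local dimension $1$ at $\alpha$'' is not justified as written. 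The paper's terser treatment, which cites Mumford and then asserts the resulting component contains $\alpha$, is sliding past the same point; your case split makes the issue visible rather than introducing a new one, but the justification you give for case (b) does not close it.
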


%

	\begin{proof}
		Without loss of generality, we may assume that $J=\{1,\ldots,r+1\}$ 
		with the usual order on the natural numbers
		and $\ell=r+1$. To simplify notation, write $(\bP^1)^{r+1}=(\bP^1)^J$ equipped
		with the projection $\pi:=\pi^J$ from $(\bP^1)^n$
		to the first $r+1$ coordinates. 

		We now assume that $F^J_k(\alpha_1\ldots,\alpha_r)=0$
		for every $1\leq k\leq D$ (which implies $F^J_0(\alpha_1,\ldots,\alpha_r)=0$ too) and we arrive at a contradiction. Write
		$\alpha'=(\alpha_1,\ldots,\alpha_r)$. Since the equation $F^J(x_1,\ldots,x_{r+1})=0$
		defines (the affine part of) $\pi(X)$, we have that $\pi(X)$ contains
		the curve $\alpha '\times\bP^1$. Consider the morphism  
		$\pi\mid_X:\ X\to \pi(X)$; we have that some irreducible component $Y$
		of $(\pi\mid_X)^{-1}\left(\alpha '\times \bP^1\right)$
		has dimension at least $1$ (see \cite[pp.~48]{MumfordRed}).
		In other words, we have:
		\begin{itemize}
			\item $Y\subseteq X\cap (\alpha '\times (\bP^1)^{n-r})$;
			\item $\dim(Y)>0.$
		\end{itemize}
		Therefore $Y$ is an anomalous subvariety of $X$ containing  $\alpha$, contradiction. 
		\end{proof}

		Now let $V$ be an irreducible $\varphi_n$-periodic hypersurface of $(\bP^1)^n$. We define next an embedding $e_V:(\bP^1)^{n-1}\to (\bP^1)^n$ such that $e_V\left((\bP^1)^{n-1}\right)=V$. According to Theorem~\ref{thm:MS theorem}, $V$ is defined by $x_i=\zeta$
		for some $1\leq i\leq n$ and periodic $\zeta$
		or $x_i=g(x_j)$ for some $1\leq i\neq j\leq n$ and $g\in \Qbar[x]$ commuting
		with an iterate of $f$. If $V$ is given by $x_i=\zeta$, we define:
		$$e_V(a_1,\ldots,a_{n-1}):=(a_1,\ldots,a_{i-1},\zeta,a_i,\ldots,a_{n-1}).$$
		If $V$ is given by $x_i=g(x_j)$ and $i<j$, we define:
		$$e_V(a_1,\ldots,a_{n-1})=(a_1,\ldots,a_{i-1},g(a_{j-1}),a_{i},\ldots,a_{n-1}),$$
while if $j<i$, we  define:
    $$e_V(a_1,\ldots,a_{n-1})=(a_1,\ldots,a_{i-1},g(a_{j}),a_{i},\ldots,a_{n-1}).$$
    Consider the self-map $\varphi_{n-1}$ on $\bA^{n-1}$. Since the point $\zeta$ is periodic 
    and $g$ commutes with an iterate of $f$, we obtain that
    $e_V$ maps $\varphi_{n-1}$-periodic subvarieties of $(\bP^1)^{n-1}$
    to $\varphi_n$-periodic subvarieties of $(\bP^1)^n$ (contained in $V$). Then it is also
    easy to prove that $e_V$ maps special subvarieties of $(\bP^1)^{n-1}$
    to special subvarieties of $(\bP^1)^n$. Later on, in the proof of 
    Theorem~\ref{thm:informal}, 
    we use $e_V$
    to identify irreducible components of $X\cap V$ with subvarieties of $(\bP^1)^{n-1}$ so 
    that
    we can use the induction hypothesis. In fact, we have the following:
		\begin{lemma}\label{lem:e_V and anomalous}
		Let $W$ be an irreducible subvariety and let $V$ be
		an irreducible $\varphi_n$-periodic hypersurface of $(\bP^1)^n$. Let
		$\alpha\in W^{\oa}_{\varphi_n}\cap V(\Qbar)$. Let $W'$ be an irreducible component of
		$W\cap V$ containing $\alpha$. Since $e_V^{-1}(W')$ is an irreducible
		subvariety of $(\bP^1)^{n-1}$, we can
		define the set $e_V^{-1}(W')^{\oa}_{\varphi_{n-1}}$ as before;
		 then $e_V^{-1}(\alpha)\in e_V^{-1}(W')^{\oa}_{\varphi_{n-1}}$. 
	\end{lemma}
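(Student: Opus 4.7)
The plan is to argue by contradiction. Suppose that $e_V^{-1}(\alpha)\notin e_V^{-1}(W')^{\oa}_{\varphi_{n-1}}$; then there exist an irreducible $\varphi_{n-1}$-anomalous subvariety $Y'\subseteq e_V^{-1}(W')$ containing $e_V^{-1}(\alpha)$, together with a $\varphi_{n-1}$-special subvariety $Z'\subseteq (\bP^1)^{n-1}$ witnessing the anomaly, i.e.
$$Y'\subseteq e_V^{-1}(W')\cap Z'\quad\text{and}\quad \dim(Y')>\max\{0,\dim(e_V^{-1}(W'))+\dim(Z')-(n-1)\}.$$
I would then push these data forward along $e_V$: set $Y:=e_V(Y')$ and $Z:=e_V(Z')$, and derive a contradiction by showing that $Y$ is $\varphi_n$-anomalous in $W$ and contains $\alpha$, violating $\alpha\in W^{\oa}_{\varphi_n}$.

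Three observations drive the argument. First, $e_V$ is an isomorphism from $(\bP^1)^{n-1}$ onto $V$ (it is essentially the graph parametrization of the hypersurface equation given by Theorem~\ref{thm:MS theorem}), so it preserves both irreducibility and dimension; in particular $\dim(Y)=\dim(Y')$, $\dim(Z)=\dim(Z')$, and $e_V^{-1}(W')$ is indeed irreducible with $\dim(e_V^{-1}(W'))=\dim(W')$. Second, as noted in the discussion preceding the lemma, $e_V$ sends $\varphi_{n-1}$-special subvarieties of $(\bP^1)^{n-1}$ to $\varphi_n$-special subvarieties of $(\bP^1)^n$; in particular $Z$ is $\varphi_n$-special. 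Third, assuming $W\not\subseteq V$, every irreducible component of $W\cap V$ has codimension exactly one in $W$ since $V$ is a hypersurface, so $\dim(W')=\dim(W)-1$.

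Granting these, the anomaly inequality transforms cleanly:
$$\dim(Y)=\dim(Y')>\max\{0,\dim(W')+\dim(Z')-(n-1)\}=\max\{0,\dim(W)+\dim(Z)-n\},$$
and $Y\subseteq W'\cap Z\subseteq W\cap Z$, so $Y$ is a $\varphi_n$-anomalous subvariety of $W$. Since $\alpha=e_V(e_V^{-1}(\alpha))\in e_V(Y')=Y$, this contradicts $\alpha\in W^{\oa}_{\varphi_n}$ and completes the argument.

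The one subtle point is the edge case $W\subseteq V$, which I would dispose of at the start. In that case, taking $Y=W$ and the special witness $Z=V$ directly shows that $W$ is $\varphi_n$-anomalous in itself as long as $\dim(W)\geq 1$ (since then $\dim(W)>\dim(W)-1=\max\{0,\dim(W)+(n-1)-n\}$), forcing $W^{\oa}_{\varphi_n}=\emptyset$ and contradicting the hypothesis on $\alpha$. The residual possibility $\dim(W)=0$ is trivial: $W=W'=\{\alpha\}$, $e_V^{-1}(W')$ is a single point, and a point has no proper anomalous subvariety, so $e_V^{-1}(\alpha)$ lies in the anomalous complement vacuously. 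The only genuine obstacle is careful bookkeeping of dimensions through the isomorphism $e_V$ — the verification is essentially a single arithmetic check once the three observations above are in hand.
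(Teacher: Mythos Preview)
Your proposal is correct and follows essentially the same argument as the paper's proof: assume for contradiction that $e_V^{-1}(\alpha)$ lies in some $\varphi_{n-1}$-anomalous $Y'\subseteq e_V^{-1}(W')$ with special witness $Z'$, push forward via $e_V$, use that $e_V$ preserves dimension and sends special subvarieties to special subvarieties, and check the dimension inequality using $\dim(W')=\dim(W)-1$. The paper disposes of the case $W\subseteq V$ in one line by invoking Lemma~\ref{lem:1st easy} (since $V$ is a proper periodic, hence special, subvariety), whereas you verify it directly; your extra handling of the $\dim(W)=0$ case is harmless but not needed in the paper's setup.
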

	\begin{proof}
		Since $W^{\oa}_{\varphi_n}\neq \emptyset$, we have that $W$ is not contained in $V$. And since
		$W\cap V\neq \emptyset$, every irreducible component of
		$W\cap V$ has dimension $\dim(W)-1$. 

		
		We prove the lemma by contradiction: assume there exists an 
		anomalous subvariety $Y$ of $e_V^{-1}(W')$ containing $e_V^{-1}(\alpha)$. Hence there exists
		a special subvariety $Z$ of $(\bP^1)^{n-1}$ satisfying the following  condition:
		\begin{equation}
Y\subseteq e_V^{-1}(W')\cap Z\text{ and }\dim(Y) > \max\{0,\dim(e_V^{-1}(W'))+\dim(Z)-(n-1)\}.
	  \end{equation}
		
		We have $\dim(e_V^{-1}(W'))=\dim(W')=\dim(W)-1$, $\dim(e_V(Y))=\dim(Y)$ and $\dim(e_V(Z))=\dim(Z)$. Together with
		the previous condition, we have:
		\begin{equation}
			e_V(Y)\subseteq W\cap e_V(Z)\text{ and }\dim(e_V(Y))>
			\max\{0,\dim(W)+\dim(e_V(Z))-n\}.
		\end{equation}
		
		By the discussion in the paragraph before this lemma, we have that $e_V(Z)$
		is a special subvariety of $(\bP^1)^n$. Therefore $e_V(Y)$ is an anomalous 
		subvariety of $W$.  Since $\alpha\in e_V(Y)$,
		we get a contradiction.
	\end{proof}

We conclude this section with the following useful fact: 



\begin{cor}
\label{rem:go up}
Let $W\subseteq (\bP^1)^n$ be an irreducible subvariety, let $V\subseteq (\bP^1)^n$ be an irreducible $\varphi_n$-periodic hypersurface intersecting $W$ properly, and let $W'$ be an irreducible component of $W\cap V$.
\begin{itemize}
\item[(a)] If $Y\subseteq (\bP^1)^{n-1}$ is a $\varphi_{n-1}$-anomalous subvariety of $e_V^{-1}(W')$, then $e_V(Y)\subseteq (\bP^1)^n$ is a $\varphi_n$-anomalous subvariety of both $W'$ and of $W$.
\item[(b)] If $Z\subseteq V$ is a $\varphi_n$-special subvariety and if $\tilde{Y}\subseteq W'\cap Z$ is a $\varphi_n$-anomalous subvariety of $W$
satisfying $\dim(\tilde{Y})>\max\{0,\dim(W)+\dim(Z)-n\}$ then $e_V^{-1}(\tilde{Y})$ is a $\varphi_{n-1}$-anomalous subvariety of $e_V^{-1}(W')$. 
\item [(c)] If $V$ is given by the equation $x_i=\zeta$ then part (b) holds
without the assumption $Z\subseteq V$.
\end{itemize}
\end{cor}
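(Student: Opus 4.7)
The plan is to push the anomalous structure back and forth between $(\bP^1)^{n-1}$ and $V\subseteq(\bP^1)^n$ through the embedding $e_V$, using two facts: that $e_V$ is an isomorphism onto $V$ (so it preserves dimensions), and that (by the discussion preceding Lemma~\ref{lem:e_V and anomalous}) it sends $\varphi_{n-1}$-special subvarieties of $(\bP^1)^{n-1}$ to $\varphi_n$-special subvarieties contained in $V$. For part (a), I would pick a $\varphi_{n-1}$-special $Z'$ witnessing that $Y$ is anomalous in $e_V^{-1}(W')$, form $\tilde Z:=e_V(Z')$, and observe $e_V(Y)\subseteq W'\cap\tilde Z\subseteq W\cap\tilde Z$. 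Substituting $\dim(W')=\dim(W)-1$ (valid because $V$ is a hypersurface meeting $W$ properly, by assumption) into the given dimension bound yields $\dim(e_V(Y))>\max\{0,\dim(W)+\dim(\tilde Z)-n\}$, from which the weaker inequality $\dim(e_V(Y))>\max\{0,\dim(W')+\dim(\tilde Z)-n\}$ is immediate, exhibiting $e_V(Y)$ as $\varphi_n$-anomalous in both $W'$ and $W$.

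For part (b), the core technical point is the converse transfer: if $Z\subseteq V$ is $\varphi_n$-special then $e_V^{-1}(Z)$ is $\varphi_{n-1}$-special. I would verify this by cases on the shape of $V$. When $V$ is defined by $x_i=\zeta$, any $Z\subseteq V$ has its $i$-th projection constantly equal to the periodic value $\zeta$, so $i\in J_Z$ and $e_V^{-1}$ merely deletes a frozen slot. When $V$ is defined by $x_i=g(x_j)$, a finite case analysis using Proposition~\ref{prop:refined MS} (splitting on whether each of $i,j$ lies in $J_Z$ or in one of the curve blocks $J_\ell$) again shows that $e_V^{-1}(Z)$ has the form ``point times periodic subvariety''. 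Granting this, the identities $\dim(e_V^{-1}(Z))=\dim(Z)$ and $\dim(e_V^{-1}(W'))=\dim(W)-1$ convert the hypothesis $\dim(\tilde Y)>\max\{0,\dim(W)+\dim(Z)-n\}$ into the required bound $\dim(e_V^{-1}(\tilde Y))>\max\{0,\dim(e_V^{-1}(W'))+\dim(e_V^{-1}(Z))-(n-1)\}$, and the inclusion $e_V^{-1}(\tilde Y)\subseteq e_V^{-1}(W')\cap e_V^{-1}(Z)$ completes the argument.

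For part (c), I would reduce to (b) by replacing $Z$ with a suitable irreducible component of $Z\cap V$. Since $\tilde Y\subseteq W'\cap Z\subseteq V\cap Z$, one may pick an irreducible component $Z_0$ of $Z\cap V$ containing $\tilde Y$. Using the description in Proposition~\ref{prop:refined MS}, intersecting a special subvariety with the hyperplane $x_i=\zeta$ yields irreducible components that remain $\varphi_n$-special: the intersection of each periodic curve $C_\ell$ appearing in the factorization of the periodic part of $Z$ with $\{x_i=\zeta\}$ is finite, and via the defining equations of $C_\ell$ this pins the remaining coordinates of that block to periodic values. Thus $Z_0$ is $\varphi_n$-special, contained in $V$, with $\dim(Z_0)\leq\dim(Z)$, so the hypothesis $\dim(\tilde Y)>\max\{0,\dim(W)+\dim(Z)-n\}$ implies $\dim(\tilde Y)>\max\{0,\dim(W)+\dim(Z_0)-n\}$, and part (b) applied to $Z_0$ finishes the proof.

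The main obstacle is the two combinatorial verifications hidden in parts (b) and (c): that $e_V^{-1}$ preserves the ``point times periodic'' form on subvarieties contained in $V$, and that hyperplane slices $\{x_i=\zeta\}$ of special subvarieties have special irreducible components. Both are short case analyses based on Proposition~\ref{prop:refined MS}, but the coordinate bookkeeping around the inserted/deleted slot $i$ (and also $j$, in the $x_i=g(x_j)$ case) must be carried out carefully to preserve the partition structure $J_Z,J_1,\ldots,J_{n-r}$ of the signature.
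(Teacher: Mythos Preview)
Your proposal is correct and follows essentially the same route as the paper: part (a) is the argument from the proof of Lemma~\ref{lem:e_V and anomalous}, part (b) hinges on the observation that $Z\subseteq V$ forces $e_V^{-1}(Z)$ to remain special together with the dimension shift $\dim(e_V^{-1}(W'))=\dim(W)-1$, and part (c) reduces to (b) via an irreducible component of $Z\cap V$. The only difference is that you spell out the case analyses behind ``$e_V^{-1}(Z)$ is special'' and ``components of $Z\cap\{x_i=\zeta\}$ are special'', which the paper simply asserts.
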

\begin{proof}
	Part (a) follows immediately from the proof of Lemma~\ref{lem:e_V and anomalous}. 

For part (b), the condition $Z\subseteq V$ makes sure that $e_V^{-1}(Z)$
	remains a special subvariety of $(\bP^1)^{n-1}$. We have 
	 $e_V^{-1}(\tilde{Y})\subseteq e_V^{-1}(W')\cap e_V^{-1}(Z)$ and
	 use $\dim(e_V^{-1}(W'))=\dim(W)-1$ to obtain:
	$$\dim\left(e_V^{-1}(\tilde{Y})\right) >\max\{0,\dim(e_V^{-1}(W'))+\dim(e_V^{-1}(Z))-(n-1)\}.$$
	This proves that $e_V^{-1}(\tilde{Y})$ is a special subvariety of
	$e_V^{-1}(W')$.	

	For part (c), note that if $V$ is defined by $x_i=\zeta$
	then each irreducible component of $V\cap Z$ is also special. Hence the conclusion follows by applying part (b) to the special variety $Z'\subseteq V$ which is an irreducible component of $Z\cap V$ containing $\tilde{Y}$.
\end{proof}

	\section{Proof of Theorem~\ref{thm:informal} for $f\times\ldots\times f$}\label{sec:proof}
	Throughout this section, we fix a  polynomial
	$f(x)\in \Qbar[x]$ of degree $d\geq 2$ which is not conjugated to $x^d$ or to $\pm C_d(x)$. Let $n$ be a positive integer 
	and let $\varphi_n$ be the corresponding self-map of $(\bP^1)^n$ as in Assumption~\ref{assume:same maps}. Let $X$ be an  
	irreducible subvariety of $(\bP^1)^n$ of dimension $r$. Our goal is
	to prove Theorem~\ref{thm:informal} asserting that the set
	 $$X^{\oa}_{\varphi_n}\cap \bigcup_V V$$
	has bounded height where $V$ ranges among all irreducible $\varphi_n$-periodic
	subvarieties of dimension $n-r$. Note that this is obviously true when $r=0$ (or when $r=n$); so,   
	we now proceed by induction. Let $r\in\{1,\dots, n-1\}$ and assume 
	Theorem~\ref{thm:informal} is valid for all varieties of dimension less than $r$ for any $n$.

	We may assume $X^{\oa}_{\varphi_n}\neq\emptyset$; in particular, $X$ is not contained in
	any proper special subvariety of $(\bP^1)^n$ (by Lemma~\ref{lem:1st easy}). Furthermore, it suffices to replace $X^{\oa}_{\varphi_n}$ by its affine part. In other words, we only need to 
	prove that the set $\left(X^{\oa}_{\varphi_n}\cap\bA^n\right)\cap \Per^{[r]}$ has bounded height. The reason is that every point in the ``non-affine part''  
	$$X^{\oa}_{\varphi_n}\setminus \bA^n$$
	is contained in $X\cap H$ where $H$ is a $\varphi_n$-periodic
	hypersurface of the form (after a possible rearrangement of coordinates):
	$(\bP^1)^{n-1}\times \infty$. We now use Lemma~\ref{lem:e_V and anomalous}
	and the embedding $e_H$ introduced there to apply the
	induction hypothesis for the irreducible components of
	$e_H^{-1}(X\cap H)\subseteq (\bP^1)^{n-1}$.

	From now on, we assume $X^{\oa}_{\varphi_n}\cap \bA^n\neq \emptyset$. Since there are only finitely many
	possible signatures (see
	Definition~\ref{def:signature}) for all irreducible $\varphi_n$-periodic subvarieties of dimension
	$n-r$, we fix a signature $\scrS$ consisting of the following data:
	\begin{itemize}
		\item A (possibly empty) subset $J$ of $I_n:=\{1,\ldots,n\}$ such
		that $|I_n\setminus J|\geq n-r$.
		\item A partition of $I_n\setminus J$ into $n-r$ non-empty subsets
		$J_1,\ldots,J_{n-r}$.
		\item For each $1\leq k\leq n-r$, a choice of an order $\prec$ on $J_{n-r}$.
		So we can describe the ordered set $J_k$ as:
			$$i_{k,1}\prec i_{k,2}\prec\ldots\prec i_{k,m_k}$$
			where $m_k:=|J_k|$. 
	\end{itemize}

	\begin{convention}\label{convention:sub}
	From now on, to avoid triple subscripts we denote the coordinate functions
	$x_{i_{k,j}}$ as $x_{k,j}$; hence $x_{i_{k,m_k}}$
	is denoted $x_{k,m_k}$.
	\end{convention}

	It suffices to prove that the set
	$$X_{\scrS}:=(X^{\oa}_{\varphi_n}\cap \bA^n)\cap \bigcup_V V$$
	has bounded height, where $V$ ranges over all irreducible $\varphi_n$-periodic subvarieties
	of dimension $n-r$ \emph{having signature $\scrS$}. 
	Identify $(\bP^1)^n:=(\bP^1)^J\times (\bP^1)^{J_1}\times\ldots\times (\bP^1)^{J_{n-r}}$. Such a $V$ is described by the following equations: 
	\begin{itemize}
		\item The equations $x_i=\zeta_i$ for $i\in J$, where $\zeta_i$ is $f$-periodic.
		\item For $1\leq k\leq n-r$, the equations $x_{k,2}=g_{k,2}(x_{k,1})$,\ldots, $x_{k,m_k}=g_{k,m_k}(x_{k,m_{k}-1})$
		where each $g_{k,i}\in \Qbar[x]$ (for $1\leq k\leq n-r$ and $2\leq i\leq m_k$) is a polynomial commuting with
		an iterate of $f$.  
	\end{itemize}		
	If for some $1\leq k\leq n-r$, we have $m_k\geq 2$ then we define: 
	$$D(V):=\min\{\deg(g_{k,m_k}):\ 1\leq k\leq n-r,\ m_k\geq 2\}.$$ 
	If $m_k=1$
	for every $k$, then $V$ is simply of the form $(\zeta_i)_{i\in J}\times (\bP^1)^{n-r}$. If that is the case, we define $D(V)=+\infty$.

	\begin{proposition}\label{prop:bounded degree}
		There exist positive constants $c_1$, $c_2$ depending only on $X$ and
		$\scrS$
		such that for every irreducible $\varphi_n$-periodic subvariety $V$ of
		dimension $n-r$ having signature $\scrS$, if $D(V)>c_2$ then
		the height of points in $\left(X^{\oa}_{\varphi_n}\cap \bA^n\right)\cap V$ 
		is bounded above by $c_1$.
	\end{proposition}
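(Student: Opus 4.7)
The plan is to exploit the tension between the polynomial constraints coming from $X$ (via the polynomials $F^K$ of Corollary~\ref{cor:F^J}) and the multiplicative structure of $V$ (on which the top of each chain $k\in T:=\{k:m_k\geq 2\}$ has canonical height scaled by $\deg(g_{k,m_k})\ge D(V)$), and to convert both into a linear system of inequalities via Corollary~\ref{cor:key inequality}. Fix any $\alpha\in X^{\oa}_{\varphi_n}\cap \bA^n\cap V$, let $t:=|T|$, and set $H_k:=\hhat_f(\alpha_{k,1})$, $P_k:=\hhat_f(\alpha_{k,m_k})$, $E_{k,j}:=\deg(g_{k,j}\circ\cdots\circ g_{k,2})$. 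Since each $\zeta_i$ is $f$-periodic and each $g_{k,j}$ commutes with an iterate of $f$, Lemma~\ref{lem:basic cano} gives $\hhat_f(\alpha_i)=0$ for $i\in J$ and $\hhat_f(\alpha_{k,j})=E_{k,j}H_k$; in particular $P_k\ge D(V)\cdot E_{k,m_k-1}H_k$ for $k\in T$, so
\[
\hhat_f(\alpha_{k,j})\le P_k/D(V) \qquad \text{whenever }k\in T\text{ and }j<m_k.
\]

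First I would bound $P^T_{\max}:=\max_{k\in T}P_k$. For each $k_0\in T$, define the ordered set
\[
K_{k_0}:=\{(k_0,m_{k_0})\}\cup\{(k,j):k\in T,\ j<m_k\}\cup J;
\]
using $\sum_{k\in T}m_k=r+t-|J|$, a direct count shows $|K_{k_0}|=r+1$. By Corollary~\ref{cor:F^J} there is an irreducible polynomial $F^{K_{k_0}}$ cutting out the affine part of $\pi^{K_{k_0}}(X)$; Proposition~\ref{prop:can use key} lets us isolate $x_{k_0,m_{k_0}}$ and apply Corollary~\ref{cor:key inequality}, yielding
\[
P_{k_0}\le \sum_{(k,j)\in K_{k_0}\setminus\{(k_0,m_{k_0})\}}D^{(k_0)}_{k,j}\,\hhat_f(\alpha_{k,j})+C^{(k_0)},
\]
with $D^{(k_0)}_{k,j}$ and $C^{(k_0)}$ depending only on $f,X,\scrS$ (crucially not on $V$, since $K_{k_0}$ is determined by $\scrS$). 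The $(k,j)\in J$ contributions vanish, and the remaining $r-|J|\le r$ terms are each bounded by $M\cdot P^T_{\max}/D(V)$, where $M$ is a uniform bound on the $D^{(k_0)}_{k,j}$. This gives $P_{k_0}\le \frac{Mr}{D(V)}P^T_{\max}+\tilde C$ with $\tilde C:=\max_{k_0}C^{(k_0)}$; taking the maximum over $k_0\in T$ and choosing $c_2:=2Mr$ one obtains $P^T_{\max}\le 2\tilde C$ as soon as $D(V)>c_2$.

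Next I would bound $H_{k^*}$ for each $k^*\notin T$: choose any $K_{k^*}=\{(k^*,1)\}\cup K'_{k^*}$ of size $r+1$ with $K'_{k^*}\subseteq \bigcup_{k\in T}\{(k,j):1\le j\le m_k\}\cup J$, which has size $r+t\ge r$; in the borderline case $T=\emptyset$ one has $|J|=r$, so $K'_{k^*}=J$ works and the first step is vacuous. Proposition~\ref{prop:can use key} and Corollary~\ref{cor:key inequality} applied with $\ell=(k^*,1)$ then bound $H_{k^*}$ by a linear combination of $\hhat_f(\alpha_{k,j})$ for $(k,j)\in K'_{k^*}$ plus a constant, and every such summand is bounded (by $0$ for $J$-coordinates, by $P_k\le 2\tilde C$ for $T$-chain coordinates). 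Combining the two steps bounds every $\hhat_f(\alpha_i)$ uniformly in $V$, and Lemma~\ref{lem:basic cano}(a) upgrades this to a bound $h_n(\alpha)\le c_1$ depending only on $f,X,\scrS$. The main obstacle is purely combinatorial: one must verify that the prescribed $K_{k_0}$ really has size exactly $r+1$ and excludes every other top $x_{k,m_k}$ for $k\in T\setminus\{k_0\}$, so that the multiplier $1/D(V)$ genuinely applies to every $T$-chain summand on the right-hand side.
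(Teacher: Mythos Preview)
Your proposal is correct and follows essentially the same approach as the paper. Both arguments isolate one ``top'' coordinate $x_{k,m_k}$ together with the $r$ ``dominated'' coordinates (those in $J$ together with the sub-top entries of the chains), apply Proposition~\ref{prop:can use key} and Corollary~\ref{cor:key inequality} to the resulting $(r+1)$-projection, and then play the resulting inequality off against the $D(V)$-scaling coming from the equations of $V$. Your index set $K_{k_0}$ for $k_0\in T$ is literally the paper's $\Gamma_{k_0}$; the only organizational difference is that the paper sums the inequalities over all $1\le k\le n-r$ at once and solves a single self-referential inequality, whereas you take a $\max$ over $k\in T$ first and then handle the singleton chains $k\notin T$ in a separate (and slightly more flexible) second pass.
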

	\begin{proof}
		Let $V$ be defined by the equations $x_i=\zeta_i$ for $i\in J$ and
		$x_{k,i}=g_{k,i}(x_{k,i-1})$ for $1\leq k\leq n-r$ and 
		$2\leq i\leq m_k$ as before. 
		
		We prove first the case when $m_k=1$ for each $k$; this also gives insight to the general case. In this special case, without loss
		of generality we assume $J=\{1,\ldots,r\}$ so that $V=\zeta\times (\bP^1)^{n-r}$, where $\zeta=(\zeta_1,\dots, \zeta_r)$ such that each $\zeta_i$ is a periodic point for $f$. Let 
$$\alpha:=(\alpha_1,\dots, \alpha_n)\in \left(X^{\oa}_{\varphi_n}\cap \bA^n\right)\cap V(\Qbar).$$ 
In particular, $\alpha_j=\zeta_j$ for $j=1,\dots, r$. Now, for each $i=r+1,\dots, n$, we let $\pi_i$ be the projection of $(\bP^1)^n$ onto $(\bP^1)^{r+1}$
consisting of the $r+1$ coordinates $x_1,\dots, x_r, x_i$. Using Proposition~\ref{prop:can use key}, Corollary~\ref{cor:key inequality} 
and Lemma~\ref{lem:basic cano}, we obtain that $h(\alpha_i)$ is uniformly bounded, as
desired. 

From now on, we assume that $m_k\ge 2$ for some $k\in\{1,\dots, n-r\}$. 
The idea of our proof is that on $V$, the $n-r$ coordinate
		functions $x_{k,m_k}$ for $1\leq k\leq n-r$
		dominate the other $r$ coordinate functions in terms of height.
		However, due to the equations defining $X$, the coordinate functions
		$x_{k,m_k}$ for $1\leq k\leq n-r$ cannot dominate the other $r$ coordinates
		``too much''. This will allow us to prove that for each periodic variety $V$ of signature $\scrS$, and for each $(\alpha_1,\dots, \alpha_n)\in \left(X^{\oa}_{\varphi_n}\cap \bA^n\right)\cap V(\Qbar)$ the heights of $\alpha_{k,m_k}$ for $k=1,\dots, n-r$ are uniformly bounded, and thus, in turn this would yield that the height of each $\alpha_i$ for $i=1,\dots, n$ is uniformly bounded. Furthermore, using Lemma~\ref{lem:basic cano} (a), it suffices to prove that $\hhat_f\left(\alpha_{k,m_k}\right)$ is uniformly bounded independent of $V$.  We formalize this idea as follows.
		
		Define $\Gamma:=I_n\setminus \{i_{1,m_1},\ldots,i_{n-r,m_{n-r}}\}$ (i.e.~the set of $r$ indices that are ``dominated by the other $n-r$ indices''). For $1\leq k\leq n-r$, define:
		$$\Gamma_k:=\{i_{k,m_k}\}\cup \Gamma.$$
		
		Let $\alpha=(\alpha_1,\ldots,\alpha_n)\in (X^{\oa}_{\varphi_n}\cap\bA^n)\cap 
		V(\Qbar)$. By the equations defining $V$, the definition of $D(V)$ and 
		part (d) of Lemma~\ref{lem:basic cano}, we have (see Convention~\ref{convention:sub}):
		\begin{equation}\label{eq:using V}
			\hhat_f(\alpha_{k,m_k})\geq 
			D(V)\hhat_f(\alpha_{k,i})\text{, for all } 1\leq k\leq n-r\text{ and }  1\leq i\leq m_k-1
		\end{equation}
		Note that (\ref{eq:using V}) is vacuously true when $m_k=1$, so inequality (\ref{eq:using V}) yields:
		\begin{equation}\label{eq:dominant terms are large}
		n\sum_{k=1}^{n-r}\hhat_f(\alpha_{k,m_k}) \geq D(V)\sum_{k=1}^{n-r}\, \sum_{i=1}^{m_k-1}\hhat_f(\alpha_{k,i})
		\end{equation}
		
		For $1\leq k\leq n-r$, we consider the projection $\pi_k:=\pi^{\Gamma_k}$ from 
		$(\bP^1)^n$
		onto $(\bP^1)^{\Gamma_k}=(\bP^1)^{r+1}$. By Corollary~\ref{cor:F^J},
		there is an irreducible polynomial $F_k:=F^{\Gamma_k}$
		in $r+1$ variables such that $\pi_k(X)$ is defined by the equation
		$F_k=0$. 
		
		By Proposition~\ref{prop:can use key} and Corollary~\ref{cor:key inequality},
		there exist positive constants $c_3$ and $c_4$ depending only on 
		$X$ and $\scrS$ such that:
		\begin{equation}\label{eq:use key}
			\hhat_f(\alpha_{k,m_k})\leq c_3\sum_{i\in \Gamma} \hhat_f(\alpha_i) + c_4.
		\end{equation}
for all $k=1,\dots, n-r$. By the equations defining $V$, we have $\alpha_i=\zeta_i$ is periodic for $i\in J$. Hence (\ref{eq:use key}) and part (c) of Lemma~\ref{lem:basic cano} give:
		\begin{equation}
			\hhat_f(\alpha_{k,m_k})\leq c_3\sum_{\ell=1}^{n-r}\,\sum_{i=1}^{m_{\ell}-1} \hhat_f(\alpha_{\ell,i}) + c_4.
		\end{equation}
for all $k=1,\dots, n-r$.	This yields:
		\begin{equation}\label{eq:dominant terms are small}
			\sum_{k=1}^{n-r}\hhat_f(\alpha_{k,m_k})\leq nc_3\sum_{\ell=1}^{n-r}\,\sum_{i=1}^{m_{\ell}-1} \hhat_f(\alpha_{\ell,i}) + nc_4
		\end{equation}
		From (\ref{eq:dominant terms are large}) and (\ref{eq:dominant terms are small}), we
		have that if $D(V)\geq 2n^2c_3$ then:
		$$\sum_{k=1}^{n-r}\hhat_f(\alpha_{k,m_k})\leq 2nc_4,$$
and more generally:
$$\sum_{i=1}^n \hhat_f(\alpha_i) \le 2nc_4 + \frac{c_4}{c_3}.$$
		This finishes the proof of the proposition.
	\end{proof}

	\begin{proof}[End of the proof of Theorem \ref{thm:informal} for $f\times\ldots\times f$.] We need to prove that the
	set $X_{\scrS}$ has bounded height. Let $c_1$ be
	the positive constant in the conclusion of Proposition~\ref{prop:bounded degree}, 
	it suffices to show that the set:
	$$X_{\scrS,c_1}:=(X^{\oa}_{\varphi_n}\cap\bA^n)\cap \bigcup_V V$$
	has bounded height, where $V$ ranges over all irreducible $\varphi_n$-periodic
	subvarieties of dimension $n-r$ having signature $\scrS$ and $D(V)\leq c_1$. For 
	every such $V$, choose
	$1\leq\ell\leq n-r$ such that
	$\deg(g_{\ell,m_{\ell}}) = D(V)\leq c_1$. By part (d) of 
	Proposition~\ref{prop:all_g}, there are only finitely many such polynomials $g_{\ell,m_{\ell}}$. We let $H$ be the periodic hypersurface defined by $x_{\ell,m_\ell}=g_{\ell,m_{\ell}}\left(x_{\ell, m_{\ell-1}}\right)$; therefore there are only \emph{finitely many}
	possibilities for $H$. Since $V\subseteq H$, we can apply the induction hypothesis to each irreducible component of $e_H^{-1}(X\cap H)$  
	by using  Lemma~\ref{lem:e_V and anomalous}. Note that going from $e_H^{-1}(X\cap H)$
	to $X\cap H$ can increase the height by the factor $\deg(g_{\ell,m_{\ell}})$, but
	this is fine since there are only finitely many possibilities for $g_{\ell,m_{\ell}}$. 
	\end{proof}

  \section{Proof of Theorem~\ref{thm:open} for $f\times\ldots\times f$}\label{sec:open}
	
	Throughout this section, let $f(x)\in\Qbar[x]$ be a polynomial of degree
	$d\geq 2$ which is not conjugated to $x^d$ or to $\pm C_d(x)$, let $n$ be a positive integer, and let $\varphi_n$ be the diagonal action of $f$ on $(\bP^1)^n$
	as in Assumption~\ref{assume:same maps}. Let $X\subseteq (\bP^1)^n$ be a given irreducible variety 
	of dimension $r$ as in the previous section. Let $U:=U_X$ denote the union of
	all the anomalous subvarieties of $X$; we will prove that $U$ is Zariski closed in
	$X$. This is obvious when $X$ is a curve because in this case either $X$ is itself anomalous, or $U=\emptyset$. Therefore Theorem~\ref{thm:open}
	holds when $n=1,2$. We proceed by induction: fix $n\geq 3$ and assume that
	the conclusion of the theorem is valid for all smaller values of $n$. We may 
	assume
	$2\leq r\leq n-1$. By Lemma~\ref{lem:1st easy}, Lemma~\ref{lem:projection r+1}
	and their proof, we may assume that $X$ is not contained in
	any special subvariety, and that for any choice of $j\geq r$ factors $\bP^1$ 
	the image of the projection from $X$ to $(\bP^1)^j$ has dimension $r$.

For each subset $J\subseteq I_n$, we let $U_J$ be the union of all anomalous subvarieties of $X$ obtained by intersecting $X$ with a special subvariety of the form $\zeta\times (\bP^1)^J$, where $\zeta\in (\bP^1)^{n-|J|}$ (after a possible rearrangement of coordinates); next we show  that $U_J$ is closed. Indeed, we let $J':=I_n\setminus J$, and then we take the projection $\pi_{J'}:X\lra (\bP^1)^{J'}$. According to \cite[pp.~51]{MumfordRed}, the set of all $x\in X$ for which there exists an irreducible component of the fiber $\pi_{J'}^{-1}(\pi_{J'}(x))$ of dimension at least equal to $\max\{1, r+|J|-n+1\}$ passing through $x$ is closed; so, $U_J$ is a closed subset of $X$, as claimed. Let 
$$U_0:=\bigcup_{J\subseteq I_n}U_J;$$
then $U_0$ is a closed subset of $X$. We also let $U_{\infty}$ denote the
union of all anomalous subvarieties of $X$ that are contained in
a hypersurface of the form $\infty\times (\bP^1)^{n-1}$ (after
a possible rearrangement of coordinates).  By Corollary~\ref{rem:go up} (especially part (c)) and the induction
hypothesis, the set $U_{\infty}$ is Zariski closed in $X$. 

Now we let $Y$ be an anomalous subvariety of $X$ that is not contained in $U_0\cup U_\infty$. Let $Z$ be a special subvariety of $X$ such that $Y\subseteq X\cap Z$
and 
$$\dim(Y)>\max\{0,\dim(X)+\dim(Z)-n\}.$$
Write $\ell=\dim(Z)$. Without loss of generality, write
$Z=\zeta\times Z_0$ where $\zeta\in (\bP^1)^m(\Qbar)$ (the first $m$
coordinates) and $Z_0$ is a $\varphi_{n-m}$-periodic
subvariety of $(\bP^1)^{n-m}$ which projects dominantly onto each coordinate of $(\bP^1)^{n-m}$; we allow for the possibility that $m=0$, in which case $Z$ is simply a $\varphi_n$-periodic subvariety of $(\bP^1)^n$. Let $\delta:=\dim(Y)$; then $\delta\ge \max\{1, r+\ell-n+1\}$. There exist $\delta$ coordinates of $(\bP^1)^n$ such that $Y$ maps dominantly to $(\bP^1)^{\delta}$. This fact follows from the Implicit Function Theorem by considering a smooth point of $Y$. Without loss of generality, we assume these $\delta$ coordinates are the last $\delta$ coordinates of $(\bP^1)^n$. Furthermore,
since the image of $Y$ under the projection map $(\bP^1)^n\mapsto (\bP^1)^{\delta}$ is closed, it has to be the entire
$(\bP^1)^{\delta}$.

Partition $\{m+1,\ldots,n\}$
into $\ell$ ordered subsets $J_i$ such that,
writing $(\bP^1)^n:=(\bP^1)^m\times (\bP^1)^{J_1}\times \cdots \times (\bP^1)^{J_\ell}$, then $Z=\zeta\times C_1\times \cdots \times C_\ell$, where each $C_i\subseteq (\bP^1)^{J_i}$ is a periodic curve. For each $i=1,\dots, \ell$ we list the elements of $J_i$ as $j_{i,1}\prec \cdots \prec j_{i,m_i}$ (where $m_i:=|J_i|$). As before,
to avoid triple subscripts (such as $x_{j_{i,m_i}}$), we use $x_{i,s}$
to denote $x_{j_{i,s}}$. Each $C_i$ is defined by the equations
$x_{i,2}=g_{i,2}(x_{i,1}),\ldots,x_{i,m_i}=g_{i,m_i}(x_{i,m_i-1})$
where each $g_{i,j}$ (for $1\leq i\leq \ell$ and $2\leq j\leq m_i$) commutes with an iterate of $f$.



Since $Y$ projects onto the last $\delta$ coordinates $(\bP^1)^{\delta}$ of $(\bP^1)^n$, then each index $n-\delta+1\le j\le n$ is part of a different chain $J_i$. Furthermore, because for each $i=1,\dots, \ell$, and for each $j=2,\dots, m_i$ we have $x_{i,j}=g_{i,j}(x_{i,j-1})$ for some polynomial $g_{i,j}$ (commuting with $f$), we may 
 assume 
$$\{n-\delta+1,\dots, n\}\subseteq \{j_{1,m_1},\dots, j_{\ell,m_\ell}\}.$$
Because $Y$ projects onto the last $\delta$ coordinates $(\bP^1)^{\delta}$, 
and $(Y\cap \bA^n)\setminus U_0$ is non-empty open in $Y$, given any real number  $B>1$, there exists $(a_1,\dots, a_n)\in ((Y\cap\bA^n)\setminus U_0) (\Qbar)$ such that $h(a_{n-\delta+i})<1/B$ for each $i=1,\dots, \delta-1$, while $h(a_n)>B$. Let $S_B$
denote the set of all such points (which is actually Zariski dense in $Y$). 

Let $\Gamma_0\subseteq I_n$ consist of: 
\begin{itemize}
\item each $s=1,\dots, m$;
\item each $s\in \left(\cup_{i=1}^\ell J_i\right)\setminus \{j_{1,m_1},\dots, j_{\ell,m_\ell}\}$; 
\item each $s=n-\delta+1,...,n-1$.
\end{itemize}
In other words, $x_s$ for $s\in \Gamma_0$ is either one of the first $m$ ``constant coordinates'', or one of the ``dominated coordinates'' in the chains $J_i$,
or one of the $\delta-1$ coordinates $x_{n-\delta+1},...,x_{n-1}$ whose
valuations on $S_B$ have heights bounded by $1/B$. By construction, the above three sets are disjoint and therefore: $$|\Gamma_0|=m+(n-m-\ell)+(\delta-1)=n-\ell-1+\delta\ge r.$$
So we can fix $\Gamma$ to be a subset of $\Gamma_0$ of cardinality $r$. For each $i\in I_n\setminus \Gamma_0$, let $\Gamma_i:=\Gamma\cup\{i\}$ and consider the projection $\pi_i:X\lra (\bP^1)^{J_i}$. Since $\dim(\pi_i(X))=r$, then $\pi_i(X)\subset (\bP^1)^{\Gamma_i}$ is defined by  $F_i=0$ where
$F_i$ is a polynomial in the variables $x_k$ for $k\in \Gamma_i$. We write: 
$$F_i=\sum_{j=0}^{D_i}F_{i,j} x_i^j,$$
where $D_i=\deg_{x_i}F_i$ and each $F_{i,j}$ is a polynomial in the variables $x_k$ where $k\in\Gamma$.

Let $(a_1,\dots, a_n)\in S_B$ for some $B>1$. We claim that for every $i\in I_n\setminus \Gamma_0$, there exists $j=0, \cdots ,D_i$ for which $F_{i,j}(a_k)_{k\in\Gamma}\ne 0$. 
Otherwise, as explained in the proof of Proposition~\ref{prop:can use key},
the point $(a_1,\ldots,a_n)$
would be contained in an anomalous subvariety obtained
by intersecting $X$ with $(a_k)_{k\in\Gamma}\times (\bP^1)^{I_n\setminus \Gamma}$.
This would imply $(a_1,\ldots,a_n)\in U_0(\Qbar)$, violating the 
above definition of $S_B$. Now we use arguments similar to those in the proof of
Proposition~\ref{prop:bounded degree}. By Corollary~\ref{cor:key inequality}, there
exist positive constants $c_5$ and $c_6$ depending only on $X$
such that  
that for every $i\in I_n\setminus \Gamma_0$, 
\begin{equation}
\label{ineg 1}
\hhat_f(a_i)\le c_5\sum_{j\in \Gamma} \hhat_f(a_j) + c_6.
\end{equation}
There exists $i\in I_n$ such that $m_i\ge 2$ since otherwise $Z=\zeta\times (\bP^1)^{n-m}$ contradicting our assumption that $Y$ is not contained in $U_0$. 
Let $M$ be a positive integer to be chosen later. If $\deg(g_{i,m_i})>M$ for each $i=1,\dots, \ell$ such that $m_i\ge 2$, then
$$n\hhat_f(a_{i,m_i})>M \sum_{j=1}^{m_i-1}\hhat_f(a_{i,j}).$$
Using the fact that $(a_1,\dots, a_m)=\zeta$ is constant, and also that $\hhat(a_{n-\delta+1}),\ldots,\hhat_f(a_{n-1})$ are uniformly bounded by
$1/B <1$, we conclude that there
exists a positive constant $c_7$ (depending only on $n$ and $\zeta$)
such that:
\begin{equation}
\label{ineg 2}
n\sum_{i\in I_n\setminus \Gamma_0}\hhat(a_i) >M\sum_{j\in\Gamma}\hhat(a_j)-c_7.
\end{equation}
Now fix $M>n^2 c_5$, then $\hhat(a_i)$ for $i\in I_n\setminus \Gamma_0$ is bounded solely in terms of $M,n,c_5,c_6,c_7$. This contradicts the fact that $\hhat_f(a_n)>B$
once $B$ is chosen to be sufficiently large. In conclusion, there exists $i=1,\dots,  \ell$ such that $m_i\geq 2$ and $\deg(g_{i,m_i})\le M$. 

By Proposition~\ref{prop:all_g}, there are at most finitely many polynomials $g$ of degree bounded by $M$ which commute with an iterate of $f$. For each such $g$
there
are $n(n-1)$
periodic hypersurfaces in $(\bP^1)^n$
defined by $x_j=g(x_k)$ for $1\leq k\neq j\leq n$. Denote the collection of
all such hypersurfaces (for all choices of $g$) by $\mathcal{V}$.
We conclude that for every 
special subvariety $Z$ such that there exists a subvariety $Y\subseteq X\cap Z$
satisfying
$\dim(Y)>\max\{0,\dim(X)+\dim(Z)-n\}$
and $Y\not\subseteq U_0\cup U_{\infty}$ then $Z\subseteq V$
for some $V\in \mathcal{V}$. 

For every $V\in\mathcal{V}$, let $W_{V,i}$ for $1\leq i\leq n(V)$
be all the irreducible components of $X\cap V$ and let $e_V$ denote
the embedding associated to $V$ as in Section~\ref{sec:elementary geometry}. 
Let $U_V$ denote the union of the anomalous loci of
$e_V^{-1}(W_{V,1}),\ldots,e_V^{-1}(W_{V,n(V)})$. 
By Corollary~\ref{rem:go up} and the induction hypothesis,
we have that $U$ is exactly the Zariski closed set:
\begin{equation}
\label{structure equation}
U_0\bigcup U_\infty\bigcup _{V\in\mathcal{V}} e_V(U_V).
 \end{equation}
This concludes the proof that $X^{\oa}_{\varphi_n}$ is Zariski open in $X$. 

The second part of Theorem~\ref{thm:open} holds immediately
for the anomalous subvarieties contained in $U_0$ simply by the definition of $U_0$.
For the anomalous subvarieties contained in $U_\infty$, we apply 
Corollary~\ref{rem:go up} and the induction hypothesis for 
the irreducible components of $X\cap H$ where $H$ ranges
over all special hypersurfaces of the form $\infty\times (\bP^1)^{n-1}$ (after
a possible rearrangement of coordinates). Finally, for the anomalous subvarieties
contained in $\bigcup_{V\in \mathcal{V}} e_V(U_V)$, we apply
Corollary~\ref{rem:go up} and
the induction hypothesis for the irreducible components
$e_V^{-1}(W_{V,i})$ for $V\in \mathcal{V}$ and $1\leq i\leq n(V)$. This finishes
the proof of the theorem for the dynamics of $f\times\ldots\times f$.

	\section{More general dynamical systems}\label{sec:general}
	\subsection{Proof of Theorem~\ref{thm:informal} and Theorem~\ref{thm:open} in general}
%

	
	Our proof consists of two steps:
	\begin{itemize}
		\item [(I)] Reduce from $\varphi$ to a map of the
		form $\psi=\psi_1\times\ldots\times \psi_s$ 
		where $\psi_i$ is a coordinate-wise self-map
		of $(\bP^1)^{n_i}$ of the form 
		$w_i\times\ldots\times w_i$ for some $1\leq n_i\leq n$
		and disintegrated $w_i(x)\in \Qbar[x]$ such that
		$\sum_{i=1}^s n_i=n$ and $w_i$ and $w_j$ are inequivalent
		for $i\neq j$ (see Definition~\ref{def:relation}).
		\item [(II)] Consider maps having the same form like $\psi$ above.
	\end{itemize}
	The arguments in Step (II) above are essentially
	the same as those used in the proof of Theorem~\ref{thm:informal} and
	Theorem~\ref{thm:open} in the special case $f\times\ldots\times f$; one simply needs to introduce an extra layer of complexity in the notation used in the previous sections. Hence we 
	will
	explain Step (I) in detail and only briefly sketch the arguments for Step (II).
	Again, the main ingredient in Step (I) comes from \cite{MS2014}. We need the 
	following:
	
	\begin{definition}\label{def:relation}
	  For simplicity, an irreducible curve
	  in $(\bP^1)^2$ is called non-trivial if the projection
	  to each factor $\bP^1$ is non-constant.
	  Let $A_1(x),A_2(x)\in\Qbar[x]$ be disintegrated polynomials of degrees at least 
	  2.
	  We define $A_1 \approx A_2$ if the self-map $A_1\times A_2$ of $(\bP^1)^2$
	  admits a non-trivial irreducible periodic curve.
	\end{definition}
	Now let $A_1 \approx A_2$ as in Definition~\ref{def:relation}, then
	there exist a positive integer $N$, non-constant $p_1(x),p_2(x)\in\Qbar[x]$ and a disintegrated
	polynomial $w(x)\in\Qbar[x]$ such that:
	$A_i^N\circ p_i=p_i\circ w$ for $i=1,2$ (see \cite[Proposition~2.34]{MS2014}). In 
	other words, we have the
	commutative diagram:
	$$\begin{diagram}
				 		\node{(\bP^1)^2}\arrow{s,t}{p_1\times p_2}\arrow{e,t}{w\times w}
				 		\node{(\bP^1)^2}\arrow{s,r}{p_1\times p_2}\\
				 		\node{(\bP^1)^2}\arrow{e,b}{A_1^N\times A_2^N}\node{(\bP^1)^2}
				 \end{diagram}$$
	We say that the dynamical system $w\times w$ covers $A_1^N\times A_2^N$
	by the covering $p_1\times p_2$.  We can easily show that $\approx$ is an equivalence 
	relation
	on the set of disintegrated polynomials of degrees as least $2$ (see also 
	\cite[Proposition 2.11]{MS2014}) as follows. Suppose we also have $A_2\approx A_3$.
	Then $A_2^N\approx A_3^N$, hence $w\approx A_3$ by using the
	covering $p_2\times \id$. Applying the (existence of the) above commutative 
	diagram for
	the pair $(w,A_3^N)$ instead of the previous pair $(A_1,A_2)$, there exists
	a positive integer $K$ such that $w^K\times A_3^{NK}$ is covered by
	$W\times W$ for some disintegrated polynomial $W$. Hence 
	the dynamical system
	$A_1^{NK}\times A_2^{NK}\times A_3^{NK}$ on $(\bP^1)^3$
	is covered by $W\times W\times W$. By using a non-trivial periodic
	curve under $W\times W$, we have that $A_1^{NK}\times A_3^{NK}$ admits a
	non-trivial periodic curve. Hence $A_1\approx A_3$. 
	
	Applying the above arguments inductively, we can also show that given disintegrated 
	polynomials
	$A_1,\ldots,A_{\ell}$ in the same equivalence class, there exist
	a positive integer $N$, 
	non-constant polynomials $p_1,\ldots,p_{\ell}$
	and a disintegrated polynomial $w$ such that $A_i^N\circ p_i=p_i\circ w$
	for every $1\leq i\leq \ell$. In other words,
	the dynamical system $w\times\ldots\times w$ covers 
	the system $A_1^N\times\ldots\times A_n^N$
	by the covering $p_1\times\ldots\times p_n$. 
	
	Now given disintegrated polynomials $f_1,\ldots,f_n$,  
	let $\varphi=f_1\times\ldots\times f_n$, and let $s$ denote the number of (distinct) equivalence
	classes corresponding to $f_1,\ldots,f_n$ (under the equivalence relation  $\approx$). Let $n_1,\ldots,n_s$
	denote the number of distinct $f_i$'s belonging in each of these classes  (hence $n_1+\ldots+n_s=n$). 
	There exist a positive integer $N$, 
	non-constant $p_1,\ldots,p_n\in\Qbar[x]$ and disintegrated $w_1,\ldots,w_s\in\Qbar[x]$
  such that the following holds. For $1\leq i\leq s$, let
  $\psi_i$ be the self-map $w_i\times\ldots\times w_i$ on $(\bP^1)^{n_i}$.
  Let $\eta=p_1\times\ldots\times p_n$ as a self-map on $(\bP^1)^n$.
  After a possible rearrangement of the polynomials $f_1,\ldots,f_n$, we
  have the commutative diagram:
  $$\begin{diagram}
				 		\node{(\bP^1)^{n_1}\times\ldots\times 
				  (\bP^1)^{n_s}}\arrow{s,t}{\eta}\arrow{e,t}{\psi_1\times\ldots\times\psi_s}
				 		\node{(\bP^1)^{n_1}\times\ldots\times 
				  (\bP^1)^{n_s}}\arrow{s,r}{\eta}\\
				 		\node{(\bP^1)^n}\arrow{e,b}{\varphi^N}\node{(\bP^1)^n}
				 \end{diagram}$$
  
	Since the statements of Theorem~\ref{thm:informal}
	and Theorem~\ref{thm:open} are unchanged
	when we replace $f_i$ by $f_i^N$ for $1\leq i\leq n$, we may assume
	$N=1$. 
	Write $\psi:=\psi_1\times\ldots\times\psi_s$. By \cite[Proposition 2.21]{MS2014},
	every irreducible $\psi$-periodic subvariety $V$ of $(\bP^1)^n=(\bP^1)^{n_1}\times\ldots\times(\bP^1)^{n_s}$
	has the form $V_1\times\ldots\times V_s$
	where each $V_i$ is a $\psi_i$-periodic subvariety of $(\bP^1)^{n_i}$ for
	$1\leq i\leq s$. Since $\eta$ is a finite and coordinate-wise morphism, it satisfies
	the following properties:
	\begin{itemize}
		\item [(i)] $\eta$ maps $\psi$-periodic subvarieties to $\varphi$-periodic subvarieties. Some irreducible component of the inverse image of
		a $\varphi$-periodic subvariety under $\eta$ is $\psi$-periodic. 
		\item [(ii)] The same conclusion in (i) remains valid for
		$\varphi$-special and $\psi$-special subvarieties.
	  \item [(iii)] As a consequence of (ii), for every irreducible subvariety	  
	  $X$ of $(\bP^1)^n$ and every irreducible component $W$ 
	  of $\eta^{-1}(X)$, $\eta$ maps the $\psi$-anomalous locus of $W$ into the $\varphi$-anomalous
	  locus of $X$. Furthermore, we have:
	  $$\eta(\displaystyle\cup_W W^{\oa}_{\psi})=X^{\oa}_{\varphi}\ \text{and}\ 
	  \eta(\displaystyle\cup_W (W-W^{\oa}_{\psi}))=X-X^{\oa}_{\varphi}$$
	  where $W$ ranges over all irreducible components of $\eta^{-1}(X)$.
	  \end{itemize}
	  
	 The above properties of $\eta$ reduce $\varphi$
	 to maps of the form $\psi_1\times\ldots\times\psi_s$. This finishes Step (I). 
	 We briefly sketch Step (II).
	 
	 For Bounded Height, we proceed as follows. We define a $\psi$-signature $\scrS$ to 
	 be a collection consisting of
	 a signature $\scrS_i$ for each block $(\bP^1)^{n_i}$ for $1\leq i\leq s$.
	  A $\psi$-periodic subvariety $V=V_1\times\ldots \times V_s$ as above is said to 
	  have 
	  signature $\scrS$
	  if each $V_i$ has signature $\scrS_i$. Given an irreducible subvariety
	  $W$ of $(\bP^1)^n$ having dimension $r$, it suffices to show that the set $$W^{\oa}_{\psi}\cap \left(\cup_V V\right)$$  
	  has bounded height, where $V$ ranges over all irreducible $\psi$-periodic
	  subvarieties of dimension $n-r$ \emph{having the fixed signature $\scrS$}.
	  
	  Now all results in Section \ref{sec:elementary geometry} remain valid; so, in particular, 
	  once we assume $W^{\oa}_{\psi}\neq \emptyset$, the projection from $W$ to any 
	  $r+1$ factors is a hypersurface. Proposition~\ref{prop:bounded degree} remains 
	  valid with a similar proof (albeit with a more complicated system of notations
	  in order to deal with the different blocks $(\bP^1)^{n_i}$). Note that instead of
	  the canonical height $\hhat_f$ used in the proof of Proposition~\ref{prop:bounded degree}, here we use the canonical heights 
	  $\hhat_{w_i}$
	 for the coordinates inside the block $(\bP^1)^{n_i}$ for $1\leq i\leq s$. 
	 Finally, we apply the induction hypothesis as in the end of Section~\ref{sec:proof}.
	 This finishes the proof of Theorem~\ref{thm:informal}.
	 
	  For proving that $W^{\oa}_{\psi}$ is Zariski open in $W$, we proceed
	  as follows. Let $U$ denote the  union of $\psi$-anomalous subvarieties
	  of $W$. Define $U_0$ and $U_{\infty}$ as in Section~\ref{sec:open}. Let
	  $Y$ be a $\psi$-anomalous subvariety of $X$ and $Z$ a $\psi$-special
	  subvariety
	  such that $Y\subseteq X\cap Z$, $\dim(Y)>\max\{0,\dim(X)+\dim(Z)-n\}$
	  and $Y\not\subseteq U_0\cup U_{\infty}$. The same argument as in Section~\ref{sec:open}
	  (here we use the canonical heights $\hhat_{w_i}$
	  for each block $(\bP^1)^{n_i}$) 
	  shows that there is a finite collection of
	  $\psi$-periodic 
	  hypersurfaces $\mathcal{V}$ such that for every $Y$ and $Z$ as above, we have
	  $Z\subseteq V$ for some $V\in \mathcal{V}$. Then we apply the
	  induction hypothesis. This finishes the proof of Theorem~\ref{thm:open}.

	 \subsection{More general questions for rational maps}
	 We need the following:
	 \begin{definition}
		A rational function $f(x)\in \Qbar(x)$ of degree $d\geq 2$ is said to be
		disintegrated if it is not linearly conjugate to $x^d$, $\pm C_d(x)$ or
		a Latt\`es map.
	\end{definition}
	For the definition of Latt\`es maps, we refer the readers to \cite[Chapter 6]{Sil-ArithDS}. Questions about the arithmetic dynamics of $f_1\times\ldots\times f_n$
	where each $f_i$ is Latt\`es reduce to diophantine questions on  
	 products of elliptic curves. The Bounded Height Conjecture has also
	been studied in this context (see, for example \cite{Viada03}, \cite{H2}).
	 
	 
	 Now let $n$ be a positive integer and let $f_1(x),\ldots,f_n(x)$
	be \emph{disintegrated} rational maps of degrees at least 2. Let $\varphi$
	denote the coordinate-wise self-map $f_1\times\ldots\times f_n$
	on $(\bP^1)^n$. Let $I_n:=\{1,\ldots,n\}$ as before. Recall that for an ordered subset
	$J$ of $I_n$ listed as $i_1\prec\ldots\prec i_m$ where $m=|J|$,
	we let $\varphi^J$ denote the self-map $f_{i_1}\times\ldots\times f_{i_m}$
	on $(\bP^1)^J$. Let $X$ be an irreducible subvariety of $(\bP^1)^n$. We can define 
	$\varphi$-special, $\varphi$-pre-special, $\varphi$-anomalous
	and $\varphi$-pre-anomalous subvarieties and the sets $X_{\varphi}^{\oa}$ and $X_{\varphi}^{\oa,\pre}$ as in Definition~\ref{def:dyn special} and Definition~\ref{def:dyn anomalous}. We expect an affirmative answer
	for the following:
  
	\begin{question}\label{q:general}
	Let $n$ be a positive integer, let $f_1,\ldots,f_n\in\Qbar(x)$ be disintegrated rational functions of degrees at least 2 and let $\varphi$ be the associated
	self-map of $(\bP^1)^n$.  Let $X$ be an irreducible subvariety of dimension $r$ in $(\bP^1)^n$. 
	\begin{itemize}
		\item [(a)] Is it true that the set $X^{\oa}_{\varphi}$
		is Zariski open in $X$ and $X^{\oa}_{\varphi}\cap \Per_{\varphi}^{[r]}$ has bounded height? 
		\item [(b)] Is it true that the set $X^{\oa,\pre}_{\varphi}$
		is Zariski open in $X$ and $X^{\oa,\pre}_{\varphi}\cap \Pre_{\varphi}^{[r]}$ has bounded height?
	\end{itemize}
 \end{question}

	\begin{remark}\label{rem:rational f}
		Consider the case $f_1=\ldots=f_n=f\in\Qbar(x)$ and
		\emph{assume the Medvedev-Scanlon classification (Theorem~\ref{thm:MS theorem})
		is valid for $f$}. Then
		part (a) of Question~\ref{q:general} has
		an affirmative answer. We can prove this by essentially the same
		arguments as in the proof of Theorem~\ref{thm:informal}
		and Theorem~\ref{thm:open} for $f\times\ldots\times f$. Besides the Medvedev-Scanlon
		classification,
		the two 
		key results needed in those proofs are: part (d) of Proposition~\ref{prop:all_g}
		stating that for every given degree there are only finitely many
		$g$ commuting with an iterate of $f$ and part (b) of Lemma~\ref{lem:important}. 		
		In fact, we used the ``affine part'' of $X$ 
		in the above proofs solely for simplifying the notation. In the more general case, 
		we can instead work with the \emph{multi-homogeneous} polynomial $F^J$
		defining the hypersurface $\pi^J(X)$ in $(\bP^1)^J$ (for $|J|=r+1$). More details
		are given in the next two propositions. 
	\end{remark}	
		
	First we need a counterpart of Proposition~\ref{prop:all_g} for rational maps:
	\begin{proposition}\label{prop:rational all_g}
	Let $f\in\Qbar(x)$ be a disintegrated rational map. Then the following hold:
	\begin{itemize}
		\item [(a)] If $g\in\Qbar(x)$ has degree at least 2 and commutes with an
		iterate of $f$ then $g$ and $f$ have a common iterate.
		\item [(b)] The group $M(f^\infty)$ of all
		M\"obius maps commuting with an iterate of $f$
		is finite.
		\item [(c)] Assume that the Medvedev-Scanlon classification 
		(Theorem~\ref{thm:MS theorem}) is valid for $f$. In the collection
		of rational maps of degrees at least 2 commuting with an iterate of $f$, choose
		$\tilde{f}$ that has the smallest degree. We have:
		$$\left\{\tilde{f}^m\circ L:\ m\geq 0,\ L\in M(f^\infty)\right\}=\left\{L\circ\tilde{f}^m:\ m\geq 0,\ L\in M(f^\infty)\right\}$$
		and this is exactly the set of all rational maps commuting
		with an iterate of $f$.
	\end{itemize} 
	\end{proposition}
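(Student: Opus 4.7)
The plan is to mirror the proof of Proposition~\ref{prop:all_g} from \cite{IMRN2013}, replacing the elementary polynomial manipulations (normal forms, degree arguments valid only for polynomials) with classical facts about commuting rational maps.

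For part (a), the key input is the classical theorem of Ritt (later revisited by Julia, Fatou, and Eremenko) on commuting rational maps: if $f,g\in\Qbar(x)$ have degrees at least $2$ and some iterates $f^k$ and $g^\ell$ commute, then either $f$ and $g$ share a common iterate, or both are exceptional (conjugate to a power map, a Chebyshev polynomial, or a Latt\`es map). Since $f$ is disintegrated by hypothesis, the exceptional alternative is excluded and $f$ and $g$ share a common iterate, as required.

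For part (b), let $L\in M(f^\infty)$ commute with $f^k$. Then $L$ belongs to the group of M\"obius automorphisms of the Julia set $J(f)$, which is finite for any non-exceptional rational map---a classical rigidity fact. One can also argue arithmetically: any such $L$ preserves the canonical height $\hhat_f$ and hence permutes the infinite set of $f$-preperiodic points; fixing three $f$-periodic points in general position, $L$ is determined by its action on them and must send them to $f$-preperiodic points of bounded canonical height, so Northcott's theorem leaves only finitely many possibilities.

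For part (c), we follow \cite[Proposition~2.3]{IMRN2013}, assuming the Medvedev-Scanlon classification for $f$. By part (a), any $g$ of degree at least $2$ commuting with an iterate of $f$ shares a common iterate with $\tilde f$, so $g^N=\tilde f^M$ for some $N,M\ge 1$; this forces $\deg(g)$ to be a power of $\tilde d:=\deg(\tilde f)$ by minimality of $\tilde d$. Choose $m$ with $\deg(\tilde f^m)=\deg(g)$ and apply the Medvedev-Scanlon description of periodic curves in $(\bP^1)^2$ (for a sufficiently large common iterate of $f$, $g$, and $\tilde f$) to the graphs of $g$ and $\tilde f^m$: this produces an $L\in M(f^\infty)$ with $g=\tilde f^m\circ L$. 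Finally, since $\tilde f^m\circ L$ itself commutes with some iterate of $f$, the map $L\mapsto \tilde f^m\circ L\circ(\tilde f^m)^{-1}$ sends $M(f^\infty)$ into itself; combined with the finiteness established in part (b), this lets us rewrite $\tilde f^m\circ L=L'\circ \tilde f^m$ for a suitable $L'\in M(f^\infty)$, which gives the claimed equality of sets. The main obstacle is part (a): the rational analogue of Ritt's classification of commuting polynomials is considerably deeper than its polynomial counterpart, and invoking (or reproducing) this rigidity result is the technical heart of the argument; parts (b) and (c) then follow largely formally from (a), the Medvedev-Scanlon hypothesis on $f$, and standard height and degree bookkeeping.
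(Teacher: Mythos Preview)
Your proposal tracks the paper's proof closely. For (a) and (c), both you and the paper invoke Ritt's theorem \cite{Ritt23} and the argument of \cite[Proposition~2.3(d)]{IMRN2013}, respectively.

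For (b), the paper's argument differs slightly from yours: it applies Levin's theorem \cite{Levin90} in the form that only finitely many rational maps of a given degree share a common iterate with $f$, and then uses part (a) on $L\circ f$ (which has degree $\deg f$ and commutes with an iterate of $f$) to conclude that $\{L\circ f:L\in M(f^\infty)\}$, and hence $M(f^\infty)$, is finite. Your first argument---that $L$ preserves $J(f^k)=J(f)$ and that the M\"obius symmetry group of the Julia set of a non-exceptional map is finite---is essentially Levin's result packaged differently and is correct. Your alternative arithmetic argument, however, has a gap: Northcott's theorem requires bounded Weil height \emph{and} bounded degree, but $f$-preperiodic points, while all of canonical height zero, do not lie in any fixed number field; so knowing that $L$ sends three chosen periodic points into the (infinite) set of preperiodic points does not by itself bound the number of possibilities for $L$.
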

	\begin{proof}
		Part (a) is a well-known theorem of Ritt \cite{Ritt23} while part (c) could
		be proved by the same arguments used in \cite[Proposition~2.3(d)]{IMRN2013}.
	  For part (b), Levin's theorem \cite{Levin90} implies that
	  for a given degree there are at most finitely many rational maps
	  having the same iterate with $f$. Together with part (a),
	  we have that the set $\{L\circ f:\ L\in M(f^\infty)\}$ is
	  finite. Hence $M(f^\infty)$ is finite.
	\end{proof}	
		
	We now let $[x_i:y_i]$ be the homogeneous coordinate on the $i^{\text{th}}$
	factor of $(\bP^1)^n$ for $1\leq i\leq n$. Every point $P\in\bP^1(\Qbar)$
	is always written in homogeneous coordinate as $[a:b]$ with $b=1$
	or $b=0$ and $a=1$. Hence given any homogeneous polynomial $F(x,y)$,
	we have a well-defined value $F(P)$. Now let $F(X_1,Y_1,\ldots,X_n,Y_n)$
	be a multi-homogeneous polynomial that is homogeneous in each
	$[X_1:Y_1],\ldots,[X_n:Y_n]$. Let $D$ be the homogeneous degree of $F$
	in $[X_n:Y_n]$. Assume $D\geq 1$ and write:
	$$F=F_DX_n^D+F_{D-1}X_n^{D-1}Y_n+\ldots+F_0Y_n^D$$ 
	where each $F_j$ is a multi-homogeneous polynomial in $X_1,\ldots,Y_{n-1}$.
	We have the following:
	\begin{lemma}
		There exist positive constants $C_8$ and $C_9$ depending
		only on $F$ such that
		for every $(P_1,\ldots,P_n)\in (\bP^1)^n(\Qbar)$ 
		satisfying $F_j(P_1,\ldots,P_{n-1})\neq 0$
		for some $1\leq j\leq D$, we have:
		$$h(P_n)-C_8(h(P_1)+\ldots+h(P_{n-1}))-C_9\leq h(F(P_1,\ldots,P_n)).$$
	\end{lemma}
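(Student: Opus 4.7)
The plan is to reduce the statement to Lemma~\ref{lem:important}(b) by a finite case analysis on which of the points $P_i$ are at infinity. First I dispose of the case $P_n=[1:0]$: here $h(P_n)=0$ and $F(P_1,\ldots,P_n)=F_D(P_1,\ldots,P_{n-1})$, whose height is nonnegative, so the asserted inequality holds trivially for any nonnegative choice of $C_8,C_9$.

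For the remaining case $P_n=[a_n:1]$ (so $h(P_n)=h(a_n)$), I case-split on the set $S\subseteq\{1,\ldots,n-1\}$ of those indices $i$ with $P_i=[1:0]$. There are only $2^{n-1}$ such $S$, so it suffices to prove the inequality with constants depending on $S$ and then take the maximum. Fixing $S$, let $\widetilde{F}$ be the polynomial in the variables $\{a_i:i\notin S\}$ obtained from $F$ by substituting $X_i=1$, $Y_i=0$ for $i\in S$, treating $a_i$ as $X_i$ for $i\notin S$, and setting $Y_i=1$ for $i\notin S$. The multi-homogeneity of $F$ then gives
$$\widetilde{F}=\sum_{j=0}^{D}\widetilde{F}_j\,a_n^{\,j},$$
where each $\widetilde{F}_j$ is the corresponding specialization of $F_j$, and $\widetilde{F}_j$ evaluated at the chosen $a_i$'s agrees with $F_j(P_1,\ldots,P_{n-1})$.

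By hypothesis, some $F_j(P_1,\ldots,P_{n-1})\neq 0$ for a $j\in\{1,\ldots,D\}$; hence the corresponding $\widetilde{F}_j$ is a nonzero polynomial and evaluates to a nonzero number at the point $(a_i)_{i\notin S\cup\{n\}}$. Consequently, the degree $\widetilde{D}$ of $\widetilde{F}$ in $a_n$ satisfies $\widetilde{D}\ge j\ge 1$, and the nonvanishing hypothesis of Lemma~\ref{lem:important}(b) for $\widetilde{F}$ is satisfied at that point. Applying Lemma~\ref{lem:important}(b) yields
$$h(a_n)-\sum_{i\notin S\cup\{n\}}2\widetilde{D}_i\,h(a_i)-C_{2,S}\le h\bigl(\widetilde{F}\bigr)=h\bigl(F(P_1,\ldots,P_n)\bigr),$$
where $\widetilde{D}_i$ denotes the degree of $\widetilde{F}$ in $a_i$ and clearly satisfies $\widetilde{D}_i\le D_i$. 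Since $h(a_i)=h(P_i)$ for $i\notin S\cup\{n\}$ and $h(P_i)=0$ for $i\in S$, I can enlarge the sum to $\sum_{i=1}^{n-1}2D_i\,h(P_i)$; taking $C_8:=2\max_i D_i$ and $C_9:=\max_{S}C_{2,S}$ (a maximum over finitely many $S$) then gives the desired bound.

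The only point requiring care is tracking the specialization $F\mapsto\widetilde{F}$ across the $2^{n-1}$ cases to ensure that the nonvanishing hypothesis of Lemma~\ref{lem:important}(b) is inherited and that the coefficient structure matches; this is a direct consequence of the multi-homogeneous form of $F$, and I do not anticipate any substantive obstacle.
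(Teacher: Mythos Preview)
Your proof is correct and follows essentially the same approach as the paper: the paper disposes of $P_n=\infty$ trivially and for $P_n\neq\infty$ simply says to use ``completely similar arguments used in the proof of part (b) of Lemma~\ref{lem:important}''. Your case-split on the set $S$ of indices where $P_i=\infty$, followed by a direct application of Lemma~\ref{lem:important}(b) to the specialized polynomial $\widetilde{F}$, is a natural and clean way to make that sketch precise; the only cosmetic difference is that you reduce to the lemma itself rather than re-running its proof.
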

	\begin{proof}
		The inequality is obvious (for any positive $C_8$ and $C_9$) when 
		$P_n=\infty$. When $P_n\neq \infty$, we can use completely similar arguments
		used in the proof of part (b) of 
		Lemma~\ref{lem:important}.
	\end{proof}
		
	We conclude this paper with the following remark concerning part (a) of 
	Question~\ref{q:general} in general:
	\begin{remark}
\label{last general remark}
	 We now consider the most general case $\varphi=f_1\times\ldots\times f_n$. 
	 The arguments in Step (I) in the last subsection
	 actually work when $f_1,\ldots,f_n$ are disintegrated \emph{rational} maps. 
	 This boils down to \cite[Definition 2.20, Fact 2.25]{MS2014}; although the statement 
	 given
	 by Medvedev-Scanlon in \cite[Fact 2.25]{MS2014} only treats the polynomial case,
	 it is also known to be true for rational maps thanks to Medvedev's PhD thesis 
	 \cite[Theorem~10]{Alice-PhD}. Step (II) could be done as in the previous 
	 subsection where we decompose
	 $(\bP^1)^n$ into blocks $(\bP^1)^{n_1},\ldots,(\bP^1)^{n_s}$. Therefore
	 part (a) of Question~\ref{q:general} has
	 an affirmative answer if
	 the Medvedev-Scanlon classification (Theorem~\ref{thm:MS theorem})
	 holds for every disintegrated rational function $f(x)\in \Qbar(x)$. In an
	 ongoing joint work of the second author with Michael Zieve, there are
	 given reasons to expect that the Medvedev-Scanlon classification	should hold
	 in such generality.
	\end{remark}


\begin{thebibliography}{CGMM13}


\bibitem[BG06]{BG}
E.~Bombieri and W.~Gubler, \emph{Heights in {D}iophantine geometry}, New
  Mathematical Monographs, vol.~4, Cambridge University Press, Cambridge, 2006.

\bibitem[BMZ99]{BMZ99}
E.~Bombieri, D.~Masser, and U.~Zannier, \emph{Intersecting a curve with
  algebraic subgroups of multiplicative groups}, Int. Math. Res. Not.
  \textbf{1999} (1999), 1119--1140.

\bibitem[BMZ06]{BMZ_TAMS06}
\bysame, \emph{Intersecting curves and algebraic subgroups: conjectures and
  more results}, Trans. Amer. Math. Soc. \textbf{358} (2006), no.~5,
  2247--2257.

\bibitem[BMZ07]{BMZ07}
\bysame, \emph{Anomalous subvarieties-- structure theorem and applications},
  Int. Math. Res. Not. \textbf{2007} (2007), 1--33.

\bibitem[BMZ08]{BMZ_Scuola08}
\bysame, \emph{Intersecting a plane with algebraic subgroups of multiplicative
  groups}, Ann. Sc. Norm. Super. Pisa Cl. Sci. (5) \textbf{7} (2008), 51--80.

\bibitem[CGMM13]{CGMM}
Z.~Chatzidakis, D.~Ghioca, D.~Masser, and G.~Maurin, \emph{Unlikely, likely and
  impossible intersections intersections without algebraic groups}, Rend.
  Lincei Mat. Appl. \textbf{24} (2013), 485--501.

\bibitem[GTZ11]{GTZ}
D.~Ghioca, T.~Tucker, and S.~Zhang, \emph{Towards a dynamical {M}anin-{M}umford
  conjecture}, Int. Math. Res. Not. IMRN (2011), 5109--5122.

\bibitem[Hab09a]{Habegger09}
P.~Habegger, \emph{On the bounded height conjecture}, Int. Math. Res. Not. IMRN (2009), 860--886.

\bibitem[Hab09b]{H2}
\bysame, \emph{Intersecting subvarieties of abelian varieties with algebraic subgroups of complementary dimension}, Invent. Math. \textbf{176} (2009), 405--447.



\bibitem[HS00]{HS}
M.~Hindry and J.~Silverman, \emph{Diophantine {G}eometry. {A}n introduction},
  Graduate Texts in Mathematics vol. 201, Springer, New York, 2000.

\bibitem[Lau84]{Laurent}
M. Laurent, \emph{Equations diophantiennes exponentielles}, Invent. Math. \textbf{78} (1984), 299--327.

\bibitem[Lev90]{Levin90}
G.~Levin, \emph{On symmetries on a {J}ulia set}, Math. Notes \textbf{48} 
(1990), 1126--1131, translated from Mat. Zametki, vol. 48, p.72--79, November 1990.

\bibitem[Med07]{Alice-PhD}
A.~Medvedev, \emph{Minimal sets in ACFA}, PhD thesis, UC Berkeley, 2007.

\bibitem[MS14]{MS2014}
A.~Medvedev and T.~Scanlon, \emph{Invariant varieties for polynomial dynamical
  systems}, Ann. of Math. (2) \textbf{179} (2014), 81--177.

\bibitem[Mum99]{MumfordRed}
D.~Mumford, \emph{The {R}ed {B}ook of {V}arieties and {S}chemes}, second,
  expanded ed., Lecture Notes in Mathematics \textbf{1358}, Springer, Berlin
  Heidelberg, 1999.

\bibitem[Ngu13]{IMRN2013}
K.~Nguyen, \emph{Some arithmetic dynamics of diagonally split polynomial maps},
  to appear in Int. Math. Res. Not. IMRN. Published online
  doi:10.1093/imrn/rnt240, 2013.

\bibitem[Pin]{Pink}
R.~Pink, \emph{A common generalization of the conjectures of {A}ndr\'e-{O}ort,
  {M}anin-{M}umford, and {M}ordell-{L}ang}, preprint, 2005.

\bibitem[Ray83a]{Raynaud}
M.~Raynaud, \emph{Courbes sur une vari\'{e}t\'{e} ab\'{e}lienne et points de
  torsion}, Invent. Math. \textbf{71} (1983), no.~1, 207--233.

\bibitem[Ray83b]{Raynaud2}
M.~Raynaud, \emph{Sous-vari\'{e}t\'{e}s d'une vari\'{e}t\'{e} ab\'{e}lienne et
  points de torsion}, Arithmetic and geometry, vol. I, Progr. Math., vol.~35,
  Birkh\"{a}user, Boston, MA, 1983, pp.~327--352.

\bibitem[Rit23]{Ritt23}
J.~F.~Ritt, \emph{Permutable Rational Functions}, Trans. Amer. Math. Soc. \textbf{25} (1923),  399--448.


\bibitem[Sil07]{Sil-ArithDS}
J.~Silverman, \emph{The {A}rithmetic of {D}ynamical {S}ystems}, Graduate Texts
  in Mathematics vol. 241, Springer, New York, 2007.

\bibitem[Via03]{Viada03}
E.~Viada, \emph{The intersection of a curve with algebraic subgroups in a
  product of elliptic curves}, Ann. Sc. Norm. Super. Pisa Cl. Sci. (5)
  \textbf{2} (2003), 47--75.


\bibitem[YZ]{Yuan-Zhang}
X.~Yuan and S.~Zhang, \emph{The arithmetic Hodge index theorem for adelic line bundles I: number fields},
preprint (2013), arXiv:1304.3538. 

\bibitem[Zan12]{Zan}
U.~Zannier, \emph{Some problems of unlikely intersections in arithmetic and
  geometry}, Annals of Mathematics Studies 181, Princeton University Press,
  Princeton, 2012.

\bibitem[Zha06]{ZhangDist}
S.~Zhang, \emph{Distributions in {A}lgebraic {D}ynamics}, Survey in
  Differential Geometry, vol.~10, International Press, 2006, pp.~381--430.

\bibitem[Zil02]{Zilber}
B.~Zilber, \emph{Exponential sums equations and the {S}chanuel conjecture}, J.
  London Math. Soc. (2) \textbf{62} (2002), 27--44.



\end{thebibliography}
\end{document}